\newcommand{\mcM}{\mathcal{M}}
\newcommand{\mcC}{\mathcal{C}}
\theoremstyle{plain}
\newtheorem{theorem}{Theorem}[section]
\newtheorem{corollary}[theorem]{Corollary}
\newtheorem{proposition}[theorem]{Proposition}
\newtheorem{lemma}[theorem]{Lemma}
\newtheorem{observation}[theorem]{Observation}
\theoremstyle{definition}
\newtheorem{defn}[theorem]{Definition}
\theoremstyle{definition}
\theoremstyle{definition}
\newtheorem{example}[theorem]{Example}
\theoremstyle{definition}
\theoremstyle{definition}
\theoremstyle{definition}
\definecolor{dkgreen}{rgb}{0,0.6,0}
\definecolor{gray}{rgb}{0.5,0.5,0.5}
\definecolor{mauve}{rgb}{0.58,0,0.82}
\tiny\color{gray},
\begin{document}
\title{Mapping Matchings to Minimum Vertex Covers: K\H onig's Theorem Revisited}
\author[1]{Jacob Turner}
\affil[1]{Plotwise, Poortweg 4d, 2612 PA Delft, Netherlands.\\ \href{mailto:email@address}{jacob.turner870@gmail.com}}
\maketitle
\begin{abstract}
It is a celebrated result in early combinatorics that, in bipartite graphs, the size of maximum matching is equal to the size of a minimum vertex cover. K\H{o}nig's proof of this fact gave an algorithm for finding a minimum vertex cover from a maximum matching. In this paper, we revisit the connection this algorithm induces between the two types of structures. We find that all minimum vertex covers can be found by applying this algorithm to some matching and then classify which matchings give minimum vertex covers when this algorithm is applied to them.
\end{abstract}

\section{Introduction}

The minimum vertex cover problem is classical in graph theory and of the first problems to be classified as \textsf{NP}-complete \cite{karp1972reducibility} and is more generally a well studied problem in complexity theory \cite{dinur2005hardness, papadimitriou1998combinatorial, chen2006improved}. This problem also relates to maximum independent sets \cite{gallai1959uber}, dominating sets, and the Kuhn-Munkres (often known as Hungarian) algorithm in combinatorial optimization \cite{kuhn1955hungarian}.

In this paper we are interested in the strong connection between minimum vertex covers and maximum matchings in bipartite graphs, which has arisen in several fundamental results in combinatorics. The problem of finding the cardinality of a minimum vertex cover in a bipartite graphs was found to have a polynomial time algorithm when it was discovered independently by K\H{o}nig and Egerv\'ary that the size of maximum matching in a bipartite graph is in fact equal to the size of a minimum vertex cover \cite{koniggrafok,konig1916graphen, egervary1931matrixok}. As an optimization problem is a half-integral linear program that is the dual of the maximum matching problem \cite{vazirani2013approximation}.

In fact, the proof of K\H{o}nig's theorem supplies an algorithm for finding a minimum vertex cover from a maximum matching in bipartite graphs. This algorithm is still used in most state of the art algorithms today in order to find a minimum vertex cover after finding a maximum matching (which may be accomplished by one of several different algorithms \cite{hopcroft1973n, ford2009maximal, goldberg1988new}).

While the decision versions of these problems are solvable in polynomial time, counting and enumeration versions are intractable \cite{valiant1979complexity,provan1983complexity, lovasz2009matching}. Furthermore, in practical applications, the computation of maximum matchings is frequently a bottleneck in computing minimum vertex covers. An important example of this is the Kuhn-Munkres algorithm, as noted by Kuhn himself \cite{kuhn1956variants}.

This paper seeks to revisit the relationship between matchings and minimum vertex covers through the lens of the algorithm implicit in K\H{o}nig's proof of his eponymous theorem. A few natural questions stand out to us that we resolve in this paper:
\begin{itemize}
    \item If we are only interested in a maximum matching in order to compute a minimum vertex cover, can we make do with matchings satisfying a weaker condition?
    \item Can all minimum vertex covers in bipartite graphs be found by applying a uniform algorithm to matchings?
    \item Does the algorithm implicit to K\H{o}nig's proof  allow us to set up a correspondence between maximal matchings and minimum vertex covers?
\end{itemize}

The main results of this paper answer the first two questions in the affirmative and further classifies precisely which matchings map to minimum vertex covers. However, we will see that there is much subtlety to getting minimum vertex covers from matchings that ultimately prevents from making any stronger correspondence between maximal matchings and minimum vertex covers. We consider it an interesting question to determine for which graphs such a strong correspondence does exist and demonstrate an infinite family of graphs for which there is one.

The paper is organized as follows. Section \ref{sec:background} recalls the necessary terminology and results from graph theory. In Section \ref{sec:reverse}, we show that every minimum vertex cover can be found by applying the algorithm of K\H{o}nig to some matching. We then find an infinite family of bipartite graphs such that we may further assume the matchings are maximal. This family includes every bipartite graph as an induced subgraph of one of its members. In view this result, it is sufficient to classify which maximal matchings map to minimum vertex covers, which we do in Section \ref{sec:classification}.

\section{Background}\label{sec:background}

We first recall the terminology we will use before revisiting classical results and techniques from graph theory relating maximal matching and minimum vertex covers.

\begin{defn}
Let $G=(U,V,E)$ be a bipartite graph with independent sets $U$ and $V$. A \emph{matching} $\mathcal{M}$ is a subset of $E$ such that no two edges in $\mathcal{M}$ share an endpoint. We say that a vertex $v$ is \emph{saturated} by $\mathcal{M}$ if it is one of the endpoints of an edge in $\mathcal{M}$. We say a subset of vertices is saturated if every vertex in said subset is saturated.
\end{defn}

Vertex covers are a classically studied object in combinatorics and at first seem quite unrelated to maximal matchings. As it turns out, the two actually enjoy a close relationship.

\begin{defn}
Given a graph $G=(V,E)$, a vertex cover $\mathcal{C}$ is a subset of $V$ such that every edge in $E$ has at least one endpoint in $\mathcal{C}$.
\end{defn}

As discovered by K\H{o}nig and Egerv\'ary, maximum matchings can be mapped to minimum vertex covers in bipartite graphs and the procedure for doing this makes it clear that both sets have the same cardinality. To this day, essentially all state of the art algorithms for finding minimum vertex covers in bipartite graphs that does not use integer linear programming constructs a minimum vertex cover from a maximum matching.

\begin{theorem}[K\H{o}nig's Theorem \cite{konig1916graphen,koniggrafok}]\label{thm:konig}
For a bipartite graph, the size of maximum matching is equal to the size of a minimum vertex cover.
\end{theorem}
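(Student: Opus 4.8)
The plan is to establish the equality by proving two inequalities. The easy direction, that the size of any matching is at most the size of any vertex cover, is essentially immediate: given a matching $\mcM$ and a vertex cover $\mcC$, every edge of $\mcM$ must have an endpoint in $\mcC$, and since distinct edges of $\mcM$ share no endpoints, distinct edges of $\mcM$ are covered by distinct vertices of $\mcC$. This yields $|\mcM| \le |\mcC|$ for every such pair, and in particular the maximum matching size is at most the minimum vertex cover size. The substance of the theorem is therefore the reverse inequality, for which I would exhibit, from a \emph{maximum} matching $\mcM$, an explicit vertex cover of size exactly $|\mcM|$ --- this is precisely the construction implicit in K\H{o}nig's proof that the rest of the paper studies.

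First I would fix a maximum matching $\mcM$ and consider the set $Z \subseteq U \cup V$ of all vertices reachable from the unsaturated vertices of $U$ by $\mcM$-alternating paths (paths whose edges alternate between $E \setminus \mcM$ and $\mcM$, beginning with a non-matching edge). I would then propose the candidate cover
\[
\mcC = (U \setminus Z) \cup (V \cap Z).
\]
The argument then splits into two claims. The first is that $\mcC$ is a vertex cover: taking any edge $\{u,v\}$ with $u \in U$ and $v \in V$, if $u \notin Z$ then $u \in \mcC$ and we are done, while if $u \in Z$ I would argue that the alternating path reaching $u$ can be extended along $\{u,v\}$ (using that the unique matching edge at $u$, if it exists, already lies on that path), forcing $v \in Z$ and hence $v \in \mcC$. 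The second claim is the size count $|\mcC| = |\mcM|$: here I would show that every vertex of $\mcC$ is saturated by $\mcM$, and that no single edge of $\mcM$ has both endpoints in $\mcC$, which together with the cover property let me send each vertex of $\mcC$ to the matching edge saturating it and recognize this map as a bijection onto $\mcM$.

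The step I expect to be the crux --- and the only place where maximality of $\mcM$ is genuinely used --- is showing that every vertex of $V \cap Z$ is saturated. If some $v \in V \cap Z$ were unsaturated, then the alternating path reaching it would run from an unsaturated vertex of $U$ to an unsaturated vertex of $V$, i.e. it would be an augmenting path; swapping matched and unmatched edges along it would enlarge $\mcM$, contradicting maximality. This is also where bipartiteness does its work, since the clean $U$/$V$ alternation is exactly what makes the reachability argument and the cover property interlock. Once this is in place, combining $|\mcC| = |\mcM|$ with the weak inequality shows that $\mcC$ is a minimum vertex cover and that its size coincides with the maximum matching size, which completes the proof.
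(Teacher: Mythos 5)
Your proof is correct and takes essentially the same route as the paper: the paper treats this as a cited classical result and records only the construction at its heart --- the set $Z$ of vertices reachable by alternating paths from unsaturated vertices of $U$, and the cover $(U\setminus Z)\cup(V\cap Z)$ --- and your proposal is exactly the standard verification that this construction yields a vertex cover of size $|\mcM|$, combined with the easy inequality that any matching is at most any cover. In other words, you have supplied in full the argument implicit in the K\H{o}nig's Procedure that the rest of the paper studies, so there is no divergence to report.
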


We first describe  the classical way to find a maximum matching in a bipartite graph $G=(U,V,E)$ as that is frequently the first step in computing a minimum vertex covering. Other algorithms for finding maximal matchings exists, for example push-relabel algorithms, but we do not need to understand them for this paper.

We begin by finding a maximal matching $\mcM$, which is easily done by selecting edges from $E$ to the matching $\mcM$ until the addition of any further edge violates the requirements of a matching. From there, we increase the size of $\mcM$ until it is of maximum cardinality.

\begin{defn}
Given a bipartite graph $G=(U,V,E)$ and a matching $\mcM$, we define a path $P=(w_1,\dots, w_n)$, $w_i\in U\cup V$, to be an \emph{alternating path} if every other edge in the path is in $\mcM$. Furthermore, if $n$ is even and neither $w_1$ nor $w_n$ is saturated by $\mcM$, we say that $P$ is an \emph{augmenting path}. Otherwise, we say that it is \emph{non-augmenting}.
\end{defn}

It is easy to check that for an augmenting path $P$ and a matching $\mathcal{M}$, the symmetric difference is again a matching. Therefore, if an augmenting path whose beginning and end edges are not in the matching is found, the matching cannot be maximal. In fact, the converse also holds \cite{berge1957two}.

 After the maximal matching $\mcM$ is found, for every unsaturated vertex $w$, a breadth first search is performed, keeping track of all alternating paths from $w$. If another unsaturated vertex is found, then the augmenting path $P$ is reconstructed and the matching $\mcM$ is replaced with the larger matching $\mcM\triangle P$, that is, the symmetric difference  of $\mcM$ and $P$. 

Once a maximum matching $\mcM$ is found, we can use the algorithm implicit in K\H{o}nig's proof of Theorem \ref{thm:konig} to compute a minimum vertex cover of $G$. If $G=(U,V,E)$, we assume that $|U|\le |V|$. We then construct the set $Z$ which consists of every vertex in $U\cup V$ reachable from an unsaturated vertex in $U$. Then the set $S:=(U\setminus Z)\cup(V\cap Z)$ is a minimum vertex cover.

\begin{defn}\label{def:zed-set}
Given a matching $\mcM$ of a bipartite graph $G$, let the vertex cover derived by performing the procedure of K\H{o}nig's in his proof of Theorem \ref{thm:konig} be denoted by $K_{\mcM}(G)$. We call the routine of computing $K_{\mcM}(G)$ \emph{K\H{o}nig's Procedure}. Let $Z_{\mcM}(G)$ be the set of all vertices reached by alternating paths from unsaturated vertices from the smaller independent set.
\end{defn}

A major question that lead to this paper is the following: If we are only interested in finding a minimum vertex cover, is it necessary to find a maximum matching? One quickly realizes that we don't.

\begin{example}\label{ex:p3}
Consider the following graph:
\vspace{5pt}
\begin{center}
\begin{tikzpicture}
\draw[fill=black] (-0.5,.5) circle (2pt);
\draw (-0.5, 1) node{$1$};
\draw (.5, 1) node{$2$};
\draw[fill=black] (.5,.5) circle (2pt);
\draw (-0.5,.5) -- (.5,.5);
\draw (.5,.5) -- (1.5,.5);
\draw (1.5, 1) node{$3$};
\draw[fill=black] (1.5,.5) circle (2pt);
\draw (2.5, 1) node{$4$};
\draw[fill=black] (2.5,.5) circle (2pt);
\draw (1.5,.5) -- (2.5,.5);
\end{tikzpicture}
\end{center}

The minimum vertex covers of this graph are $\{1,3\}$, $\{2,3\}$ and $\{2,4\}$, which is to say a choice of endpoint for each edge in the unique perfect matching, excepting $\{1,4\}$ which does not see edge $(2,3)$.

Two of these covers can be achieved by applying K\H{o}nig's procedure to the two maximal matchings this graph has: The perfect matching maps to $\{1,3\}$ and the matching with single edge $(2,3)$ maps to $\{2,4\}$. Intriguingly, the cover $\{2, 3\}$ can be found by applying K\H{o}nig's procedure to the matching with single edge $(3,4)$

\end{example}
\vspace{1cm}

Example \ref{ex:p3} shows that it is possible to find minimum vertex covers by widening the kinds of subgraphs of a bipartite graph that K\H{o}nig's procedure is applied to. Since, at a very high level, minimum vertex cover algorithms for bipartite graphs work by searching the space of subgraphs for one that K\H{o}nig's procedure may be applied to, we seek to understand this space better, especially as it is potentially much larger than just the set of maximum matchings. 

In this paper, we show that every minimum vertex cover can be found by applying K\H{o}nig's procedure to a matching and classify which matchings produce minimum vertex covers when this procedure is applied to them. Throughout the rest of this paper, we will assume that $G=(U,V, E)$ is a connected bipartite graph with bipartition given by the vertex sets $U$, $V$ and with edges $E$. Furthermore, we assume $|U|\le |V|$ unless stated otherwise. 

\section{Obtaining minimum vertex covers from matchings}\label{sec:reverse}

It is a natural question to ask if all minimum vertex covers come from applying K\H{o}nig's procedure to certain kinds of matchings. Amazingly, the answer is yes, although they do not all come from maximal matchings as we have seen from Example \ref{ex:p3}. We first recall a classic theorem from the theory of matchings and reprove a well-known corollary.

\begin{theorem}[Hall's Marriage Theorem \cite{hall2009representatives}]\label{thm:hall}
Let $G=(U, V, E)$ be bipartite. There is matching saturating every vertex in $U$ if and only if for every $W\subseteq U$, $|W|\le |\bigcup_{w\in W}{N(W)}|$.
\end{theorem}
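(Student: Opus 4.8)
The plan is to prove the two implications separately, with the reverse (sufficiency) direction being where essentially all the work lies. The forward direction is immediate: if a matching $\mcM$ saturates $U$, then for any $W\subseteq U$ the edges of $\mcM$ incident to $W$ assign to each $w\in W$ a \emph{distinct} neighbor, so $N(W)$ contains at least $|W|$ vertices and Hall's condition holds.

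For the converse I would argue by contraposition, reusing the alternating-path machinery already set up for K\H{o}nig's procedure rather than invoking a separate induction. Suppose no matching saturates $U$, and let $\mcM$ be a \emph{maximum} matching; then some $u_0\in U$ is left unsaturated. Following Definition \ref{def:zed-set}, let $Z$ be the set of all vertices reachable from $u_0$ by alternating paths, and set $W:=Z\cap U$ and $N':=Z\cap V$. Since $u_0\in W$, the set $W$ is nonempty, and the goal is to show that $W$ witnesses the failure of Hall's condition, namely $|N(W)|=|W|-1<|W|$.

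The heart of the argument is two claims about $Z$. First, $N(W)=N'$: any $v$ reachable from $u_0$ is adjacent to the $U$-vertex preceding it on an alternating path, giving $N'\subseteq N(W)$; conversely, given $w\in W$ and an edge $(w,v)\in E$, one checks by reading this edge off along the alternating path reaching $w$ that $v\in N'$, whether or not $(w,v)\in\mcM$ and including the case $w=u_0$. Second, $\mcM$ restricts to a bijection between $N'$ and $W\setminus\{u_0\}$: since $\mcM$ is maximum there is no augmenting path out of $u_0$ (the converse to Berge's characterization recalled above), so every $v\in N'$ is saturated and its partner lies in $W$, while conversely every vertex of $W\setminus\{u_0\}$ is reached by an alternating path whose final edge is a matching edge, hence is saturated with partner in $N'$. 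Combining the two claims yields $|N(W)|=|N'|=|W|-1$, contradicting $|W|\le|N(W)|$, so a maximum matching must in fact saturate $U$.

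I expect the main obstacle to be the parity bookkeeping in the first claim — verifying that $N(W)\subseteq N'$ holds for \emph{every} neighbor of a vertex of $W$, in particular handling the case where $(w,v)$ is itself a matching edge and the degenerate case $w=u_0$. The key enabling fact throughout is that maximality of $\mcM$ forbids an augmenting path from $u_0$, which is precisely what forces every $V$-vertex of $Z$ to be saturated and pins down the count $|N'|=|W|-1$.
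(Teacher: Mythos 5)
The paper offers no proof to compare against: Hall's Marriage Theorem is quoted there as a classical result with only a citation, and the paper's own arguments merely \emph{use} it (first in Lemma \ref{lem:one-sided-mvc}). So your proposal must stand on its own, and it does: it is a correct rendering of the standard maximum-matching/alternating-path proof. The forward direction is right. For the converse, taking a maximum matching $\mcM$, an unsaturated $u_0\in U$, and $Z$ the set of vertices reachable from $u_0$ by alternating paths, your two claims both hold: bipartiteness together with the fact that $u_0$ is unsaturated forces every alternating path from $u_0$ to enter a $U$-vertex along a matching edge and a $V$-vertex along a non-matching edge, which gives $N(Z\cap U)=Z\cap V$ (including the matching-edge and $w=u_0$ cases you flag); and maximality of $\mcM$ --- via Berge's characterization, which the paper recalls in Section \ref{sec:background} --- guarantees every vertex of $Z\cap V$ is saturated with partner in $Z\cap U$, yielding the bijection and $|N(W)|=|W|-1<|W|$. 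One point worth stating explicitly: appending an edge $(w,v)$ to an alternating path ending at $w$ yields a path only when $v$ does not already lie on that path, but in the degenerate case $v\in Z$ holds trivially, so the claim survives. A pleasant feature of your route is that it reuses exactly the machinery this paper is built on (the set $Z_{\mcM}(G)$ of Definition \ref{def:zed-set}, augmenting paths, and Berge's lemma) rather than the induction on $|U|$ common in textbook treatments, so it fits the paper's toolkit even though the paper itself leaves the theorem unproved.
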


\begin{lemma}\label{lem:one-sided-mvc}
Let $G=(U,V,E)$ be bipartite and suppose that $U$ is a minimum vertex cover. Then there exists a maximum matching of $G$ saturating $U$.
\end{lemma}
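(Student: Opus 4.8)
The plan is to derive the desired matching directly from Hall's condition. Since $U$ is a vertex cover, every edge of $G$ has at least one endpoint in $U$; this is automatic here because $U$ is one side of the bipartition, so what the hypothesis really buys us is that $U$ is a \emph{minimum} vertex cover. I would first observe that if some proper subset $U' \subsetneq U$ were a vertex cover, that would contradict minimality, so $U$ is in fact a minimal (irredundant) vertex cover: every vertex $u \in U$ is the unique cover of at least one edge, i.e. $u$ has a neighbor $v \in V$ all of whose other neighbors also lie in $U$. I will use this irredundancy to verify Hall's condition for the $U$-side.

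The key step is to check Hall's marriage condition: for every $W \subseteq U$ we need $|W| \le |N(W)|$, where $N(W) = \bigcup_{w \in W} N(w)$. The hard part, and the place I expect the real work to be, is turning the minimality of $U$ into a counting inequality on neighborhoods. The natural strategy is contrapositive/deficiency: suppose Hall's condition fails, so there is some $W \subseteq U$ with $|N(W)| < |W|$. I would then try to construct a strictly smaller vertex cover by replacing $W$ with $N(W)$, namely consider $C := (U \setminus W) \cup N(W)$. Because every edge incident to $W$ has its other endpoint in $N(W)$, the set $C$ still covers all edges, and $|C| \le |U| - |W| + |N(W)| < |U|$, contradicting that $U$ is a minimum vertex cover. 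This is the crux, and care is needed to confirm $C$ genuinely covers every edge (edges with both endpoints' relevant side accounted for) and that the size bound is strict.

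Once Hall's condition is established, Theorem \ref{thm:hall} immediately yields a matching $\mcM$ saturating every vertex of $U$. It remains to argue that such a matching is maximum. I would do this via K\H{o}nig's Theorem (Theorem \ref{thm:konig}): since $U$ is a minimum vertex cover of size $|U|$, the maximum matching has size exactly $|U|$; a matching saturating all of $U$ has size $|U|$ (one edge per saturated $U$-vertex, and distinct $U$-vertices give distinct edges), hence is maximum. Alternatively, a $U$-saturating matching has size $|U|$ which already equals the matching number by K\H{o}nig, so no augmenting path can exist. Either route closes the argument.

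I expect the single genuine obstacle to be the neighborhood-replacement argument in the second paragraph; everything else is bookkeeping. One subtlety to watch is that $N(W)$ is computed in $V$ and the swap $(U\setminus W)\cup N(W)$ must be checked against \emph{all} edges, including those incident to $U \setminus W$, which remain covered by $U \setminus W$, and those incident to $W$, which are covered by $N(W)$; edges internal to neither side do not arise since $G$ is bipartite. Provided that check goes through, the strict size decrease contradicts minimality and the lemma follows.
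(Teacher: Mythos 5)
Your proof is correct and is essentially the paper's own argument in contrapositive form: where the paper shows that each $W\subseteq U$ is a minimum vertex cover of the induced subgraph $G(W)$ (hence $|W|\le|N(W)|$ because $N(W)$ also covers $G(W)$), you directly construct the smaller cover $(U\setminus W)\cup N(W)$ of $G$ when Hall's condition fails---the same replacement idea, just without the detour through $G(W)$. Both routes then invoke Hall's Marriage Theorem and observe that maximality of the resulting matching is immediate (your unused remark about irredundancy is harmless).
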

\begin{proof}
We first note that $|U|\le |V|$ as $U$ is a minimum vertex cover. Now let $W\subseteq U$ and consider the subgraph of $G$ induced by $W$ and the union of vertices $N(w)$ for $w\in W$. Call this graph $G(W)$. We claim that $W$ is a minimum vertex cover of $G(W)$.

First note that no vertex in $G\setminus G(W)$ is covered by an element of $W$. This means that every vertex in $G\setminus G(W)$ is covered by an element in $U\setminus W$. Now suppose that $G(W)$ has a minimum vertex cover smaller than $|W|$, call it $M$. Then $M\cup (U\setminus W)$ is a vertex cover of $G$ smaller than $W$, a contradiction.

Thus we have that $|W|\le |G(W)\cap V|$ since $W$ is a minimum vertex cover. But by the definition of $G(W)$, we get that $|W|\le |\bigcup_{w\in W}{N(w)}|$. Since $W$ was arbitrary, by Hall's Marriage Theorem (\ref{thm:hall}), we get the desired result. The fact the matching is maximum is trivial.
\end{proof}

Given a vertex cover $\mcC$ of a bipartite graph $G=(U,V,E)$, it is easy to see that those edges which have both endpoints in $\mcC$ form a cut of $G$. This follows directly the definition of a vertex cover. 

\begin{defn}
Given a bipartite graph $G$ and a minimum vertex cover $\mcC$, define $G_{\uparrow}\langle \mcC\rangle$ as the subgraph of $G$ induced by the vertices $(V\cap \mcC)\cup(U\setminus\mcC)$. Define $G_{\downarrow}\langle \mcC\rangle$ as the subgraph induced by $(U\cap \mcC)\cup(V\setminus\mcC)$. 
\end{defn}

We note that the cut obtained from a $\mcC$ as described above is precisely the cut that produces $G_{\uparrow}\langle \mcC\rangle$ and $G_{\downarrow}\langle \mcC\rangle$. Furthermore, by Lemma \ref{lem:one-sided-mvc}, both of these subgraphs have maximum matchings saturating the smaller of their two partitions. The union of these two maximum matchings produces a maximum matching of the whole graph, giving another proof of the K\H{o}nig's Theorem.

However, on the graph $G_{\uparrow}\langle\mcC\rangle$, we are interested in a matching potentially different from the one guaranteed by Lemma \ref{lem:one-sided-mvc}. In fact, we will try to produce a matching by applying K\H{o}nig's procedure in reverse.

Given a minimum vertex cover $\mcC$ of a bipartite graph $G$, we produce a matching $\mcM_{\mcC}$ on $G_{\uparrow}\langle\mcC\rangle$ via the following procedure.

\vspace{.2cm}

\vspace{.1cm}
\noindent\textbf{Procedure:} $Subroutine(G_{\uparrow}\langle\mcC\rangle, u,\mcM)$
\hrule
\vspace{.2cm}
    \begin{algorithmic}
    \For{all unsaturated neighbors $v$ of $u$}
    \For{all unsaturated neighbors $w$ of $v$} 
    \State{add $(v,w)$ to $\mcM$}
    \State{$\mcM\gets \mcM\cup RK(G_{\uparrow}\langle\mcC\rangle, v,\mcM)$}
    \EndFor
    \EndFor
    \Return{$\mcM$}
    \end{algorithmic}
\hrule
\newpage
\vspace{.2cm}
\noindent\textbf{Procedure:} $Main(G_{\uparrow}\langle\mcC\rangle)$\label{alg:rev_konig}
\hrule
\vspace{.2cm}
    \begin{algorithmic}
    \State{$\mcM\gets\{\}$}
    \For{$u$ in $U\setminus\mcC$}
    \If{$u$ is unsaturated in $\mcM$}
    \State{$\mcM\gets \mcM\cup Subroutine(G_{\uparrow}\langle\mcC\rangle, u,\mcM)$}
    \EndIf
    \EndFor
    \Return{$\mcM$}
    \end{algorithmic}
\hrule
\vspace{.2cm}

It is clear the subgraph produced by Procedure \ref{alg:rev_konig} is a matching given that it is checked that both endpoints of any edge added to the returned subgraph are both unsaturated. We further claim that applying K\H{o}nig's procedure to this matching gives us back the original matching on $G_{\uparrow}\langle\mcC\rangle$.

\begin{theorem}\label{thm:reverse-konig}
Let $\mcC$ be a minimum vertex cover of a bipartite graph $G$. Let $\mcM_{\uparrow}$ be the matching produced by Procedure \ref{alg:rev_konig} on $G_{\uparrow}\langle\mcC\rangle$ and let $\mcM_{\downarrow}$ be the matching on $G_{\downarrow}\langle\mcC\rangle$ guaranteed by Lemma \ref{lem:one-sided-mvc}. Then applying K\H{o}nig's procedure to the matching $\mcM:=\mcM_{\uparrow}\cup\mcM_{\downarrow}$ produces $\mcC$.
\end{theorem}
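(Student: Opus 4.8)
The plan is to reduce the statement to a single set equality. Recall that K\H{o}nig's procedure returns $K_{\mcM}(G)=(U\setminus Z_{\mcM}(G))\cup(V\cap Z_{\mcM}(G))$. Writing $A:=U\setminus\mcC$ and $B:=V\cap\mcC$, so that the vertex set of $G_{\uparrow}\langle\mcC\rangle$ is exactly $A\cup B$, I claim it suffices to prove $Z_{\mcM}(G)=A\cup B$. Indeed, $U\setminus(A\cup B)=U\cap\mcC$ and $V\cap(A\cup B)=B=V\cap\mcC$, so this equality forces $K_{\mcM}(G)=(U\cap\mcC)\cup(V\cap\mcC)=\mcC$. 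Everything therefore rests on computing the set of vertices reachable by $\mcM$-alternating paths from unsaturated vertices of $U$.

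First I would record the edge structure forced by $\mcC$ being a vertex cover: every edge meets $\mcC$, so there are no edges between $A$ and $V\setminus\mcC$, the edges of $G_{\uparrow}\langle\mcC\rangle$ run between $A$ and $B$, and $\mcM=\mcM_{\uparrow}\cup\mcM_{\downarrow}$ contains no edge with both endpoints in $\mcC$. Two consequences of the minimality of $\mcC$ are needed: (i) since $\mcM_{\downarrow}$ saturates $U\cap\mcC$ by Lemma \ref{lem:one-sided-mvc}, every unsaturated vertex of $U$ lies in $A$, so all alternating paths start in $A$; and (ii) every $b\in B$ has a neighbor in $A$, for otherwise all edges at $b$ would be covered by their other endpoint and $\mcC\setminus\{b\}$ would be a strictly smaller cover. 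With this in hand the easy inclusion $Z_{\mcM}(G)\subseteq A\cup B$ follows by induction along an alternating path from an unsaturated start: the start lies in $A$ by (i); from a reached $a\in A$ an unmatched edge can only reach $B$ by the edge structure; and from a reached $b\in B$ the unique matched edge, if any, lies in $\mcM_{\uparrow}$ (there are no cut edges and $\mcM_{\downarrow}$ avoids $B$) and so returns to $A$. Hence no vertex of $U\cap\mcC$ or $V\setminus\mcC$ is ever reached.

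The substance is the reverse inclusion $A\cup B\subseteq Z_{\mcM}(G)$, and here the specific shape of $\mcM_{\uparrow}$ produced by Procedure \ref{alg:rev_konig} enters. The key invariant I would establish, by induction on the recursion, is that every subroutine call is made on a vertex already reached by an $\mcM_{\uparrow}$-alternating path from an unsaturated root, and consequently every edge the procedure records joins a reached $b\in B$ to a reached $w\in A$, with $b$ reached from the current argument across an unmatched edge and $w$ reached from $b$ across the recorded (matched) edge. Because the procedure only ever adds edges and never matches a root, these paths remain alternating with respect to the final matching $\mcM$, so both endpoints of every recorded edge lie in $Z_{\mcM}(G)$. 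Granting the invariant, coverage is immediate: an unsaturated vertex of $A$ is a root; a saturated vertex of $A$ carries a recorded matching edge and so is reached; a saturated vertex of $B$ is reached for the same reason; and an unsaturated vertex of $B$ is reached through an unmatched edge from one of its $A$-neighbors, which exists by (ii) and already lies in $Z_{\mcM}(G)$. Combining the two inclusions gives $Z_{\mcM}(G)=A\cup B$ and hence $K_{\mcM}(G)=\mcC$.

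I expect the main obstacle to be making the invariant of the previous paragraph fully rigorous: one must check that the alternating paths the procedure implicitly traces out, built while the matching is still changing, are genuine alternating paths for the final matching $\mcM$. Concretely, this means verifying that a vertex chosen as a root is never subsequently matched and that no recorded matched edge is later invalidated, so that the reachability computed dynamically by the recursion agrees with the static set $Z_{\mcM}(G)$. This bookkeeping, together with the careful use of minimality in point (ii) to absorb exactly those $B$-vertices that the procedure leaves unsaturated, is where the real care is required; the remaining steps are direct consequences of the edge structure recorded above.
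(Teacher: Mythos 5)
Your proposal is correct and follows essentially the same route as the paper's own proof: the paper likewise splits $G$ into $G_{\downarrow}\langle\mcC\rangle$ (shown unreachable, using that $\mcM_{\downarrow}$ saturates $U\cap\mcC$ and that no edge of the cut is matched) and $G_{\uparrow}\langle\mcC\rangle$ (shown entirely reachable, using the procedure's saturated-implies-reached property together with minimality of $\mcC$ for unsaturated vertices of $V\cap\mcC$), which is exactly your reduction to $Z_{\mcM}(G)=A\cup B$. The invariant you flag as the delicate point --- that Procedure \ref{alg:rev_konig} never matches a root, so the paths it builds dynamically remain alternating for the final matching --- is assumed without verification in the paper's proof as well, so your treatment is, if anything, slightly more explicit about where care is needed.
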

\begin{proof}
First of all, since every vertex in $G_{\downarrow}\langle\mcC\rangle$ is saturated by Lemma \ref{lem:one-sided-mvc} and none of the edges in the cut between $G_{\uparrow}\langle\mcC\rangle$ and $G_{\downarrow}\langle\mcC\rangle$ are in $\mcM$, none of the vertices in $G_{\downarrow}\langle\mcC\rangle$ are reachable via alternating paths from unsaturated vertices in $G$. Thus $K_{\mcM}(G)$ agrees with $\mcC$ on $G_{\downarrow}\langle\mcC\rangle$.

We now need to show that $K_{\mcM}(G)$ agrees with $\mcC$ on $G_{\uparrow}\langle\mcC\rangle$. First let us show that every vertex in $U\setminus\mcC$ is reachable from some alternating path starting from an unsaturated vertex in $U\setminus\mcC$. Suppose some vertex $w$ were not. Then by the Procedure \ref{alg:rev_konig}, we have that $w$ is unsaturated and thus reachable trivially. 

Now suppose some $v\in V\cap\mcC$ is not reachable by an alternating path starting from an unsaturated vertex in $U\setminus \mcC$. This firstly implies that $v$ is unsaturated and secondly implies that none of its neighbors in $U\setminus\mcC$ were found and saturated either. 

If $v$ has any neighbors in $G_{\uparrow}\langle \mcC\rangle$, they are unsaturated and thus $v$ is reachable after all, ensuring $v\in K_{\mcM}(G)$. Now suppose $v$ has no neighbors in $G_{\uparrow}\langle\mcC\rangle$. Then, since $G$ is connected, it has only neighbors in $G_{\downarrow}\langle\mcC\rangle$ implying that $v$ and its entire neighborhood is in $\mcC$. This contradicts that assumption that $\mcC$ is minimum. So in sum $K_{\mcM}(G)$ agrees with $\mcC$ on $G_{\uparrow}\langle G\rangle$, proving the theorem.
\end{proof} 

By Theorem \ref{thm:reverse-konig}, Procedure \ref{alg:rev_konig} gives an algorithm that acts as a sort of inverse inverse to K\H{o}nig's procedure. However, it may well produce difference results if the vertices in $U\setminus\mcC$ are visited in a different order. Indeed, we expect this to often be the case as the following example and proposition shows that  K\H{o}nig's procedure is not injective.

\begin{example}\label{ex:non-injective}
Consider the following graph and matching (filled lines means the edge is in the matching):
\vspace{5pt}
\begin{center}
\begin{tikzpicture}
\draw[fill=black] (-0.5,.5) circle (2pt);
\draw (-0.5, 1) node{$1$};
\draw (.5, 1) node{$2$};
\draw[fill=black] (.5,.5) circle (2pt);
\draw[dashed] (-0.5,.5) -- (.5,.5);
\draw (.5,.5) -- (1.5,.5);
\draw (1.5, 1) node{$3$};
\draw[fill=black] (1.5,.5) circle (2pt);
\draw (2.5, 1) node{$4$};
\draw[fill=black] (2.5,.5) circle (2pt);
\draw[dashed] (1.5,.5) -- (2.5,.5);
\end{tikzpicture}

\end{center}
If we let $U=\{1,3\}$ and $V=\{2,4\}$, then we see that the empty matching and the above matching both yield the minimum vertex covering $\{2, 4\}$
\end{example}

\begin{proposition}\label{prop:cycle-fiber}
Let $G$ be bipartite and $\mcM$ and  $\mcM'$ be two matchings such that $\mcM\triangle \mcM'$ is disjoint union of cycles. Then $K_{\mcM}(G)=K_{\mcM'}(G)$.
\end{proposition}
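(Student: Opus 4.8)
The plan is to reduce everything to showing that the two reachable sets coincide, $Z_{\mcM}(G)=Z_{\mcM'}(G)$, since $K_{\mcM}(G)=(U\setminus Z_{\mcM}(G))\cup(V\cap Z_{\mcM}(G))$ depends on $\mcM$ only through this set. I would start from the basic observation that, because $\mcM\triangle\mcM'$ is a disjoint union of cycles, $\mcM$ and $\mcM'$ saturate exactly the same vertices: any vertex lying on a cycle is incident to one edge of $\mcM$ and one of $\mcM'$ (consecutive edges along a cycle of the symmetric difference must alternate between the two matchings, else one matching would have two edges at a common vertex), while every vertex off the cycles keeps the same incident matching edge in both. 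In particular the unsaturated vertices of $U$, from which the alternating paths of K\H{o}nig's procedure are launched, are identical for $\mcM$ and $\mcM'$.

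Next I would recast K\H{o}nig's procedure in closure form: $Z_{\mcM}(G)$ is the smallest vertex set $S$ satisfying (i) $S$ contains every unsaturated vertex of $U$; (ii) if $u\in S\cap U$ then every $V$-neighbour of $u$ lies in $S$; and (iii) if $v\in S\cap V$ and $v$ is matched by $\mcM$ to $u$, then $u\in S$. This is just the alternating breadth-first description of reachability, bookkeeping the parity of edges along an alternating path---odd edges leave a $U$-vertex along a non-matching edge, even edges return to $U$ along a matching edge. The payoff of this reformulation is that rules (i) and (ii) are entirely independent of the matching, so the defining conditions for $Z_{\mcM}(G)$ and $Z_{\mcM'}(G)$ differ only in the partner named by rule (iii).

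The heart of the argument---and the step I expect to be the main obstacle---is a cycle lemma: each cycle $C$ of $\mcM\triangle\mcM'$ is either disjoint from $Z_{\mcM}(G)$ or entirely contained in it. Writing $C=(a_0,b_0,a_1,b_1,\dots)$ with $a_i\in U$, $b_i\in V$, and choosing the orientation so that $a_ib_i\in\mcM$ (whence $b_ia_{i+1}\in\mcM'$), I would propagate membership around $C$ using only rules (ii) and (iii): if $a_i\in Z_{\mcM}(G)$, then its $C$-neighbour $b_{i-1}$ lies in $Z_{\mcM}(G)$ by (ii), and since $b_{i-1}$ is matched by $\mcM$ to $a_{i-1}$, rule (iii) forces $a_{i-1}\in Z_{\mcM}(G)$; iterating sweeps all the way around the even cycle and pulls in every vertex, and an entry through a $V$-vertex is handled by first applying (iii). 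The delicate point is precisely that rule (iii) only advances along $\mcM$-edges---this is the one place where the two matchings could a priori disagree---so the content of the lemma is that the alternation of the cycle lets this single rule still march around all of $C$.

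Finally I would assemble the pieces. To get $Z_{\mcM'}(G)\subseteq Z_{\mcM}(G)$ it suffices to verify that $Z_{\mcM}(G)$ satisfies the three closure rules defining $Z_{\mcM'}(G)$, as the latter is the smallest such set. Rules (i) and (ii) hold because they are matching-independent and already satisfied by $Z_{\mcM}(G)$. For the analogue of rule (iii) for $\mcM'$, take $v\in Z_{\mcM}(G)\cap V$ matched by $\mcM'$ to $u$: if $vu\in\mcM$ too, then rule (iii) for $\mcM$ gives $u\in Z_{\mcM}(G)$; otherwise $vu\in\mcM\triangle\mcM'$ lies on a cycle $C$ that meets $Z_{\mcM}(G)$ at $v$, so the cycle lemma yields $C\subseteq Z_{\mcM}(G)$ and in particular $u\in Z_{\mcM}(G)$. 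Hence $Z_{\mcM'}(G)\subseteq Z_{\mcM}(G)$; the symmetric argument (with the roles of $\mcM$ and $\mcM'$ exchanged) gives the reverse inclusion, so $Z_{\mcM}(G)=Z_{\mcM'}(G)$ and therefore $K_{\mcM}(G)=K_{\mcM'}(G)$.
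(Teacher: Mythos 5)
Your proof is correct, but it takes a genuinely different route from the paper's. The paper also reduces to showing $Z_{\mcM}(G)=Z_{\mcM'}(G)$, but it argues by path surgery: given an alternating path $p$ from an unsaturated vertex to $v$ with respect to $\mcM$, it writes $p$ as a concatenation $p_0\to p_1\to p_2$ with $p_1=p\cap C$ and replaces $p_1$ by the complementary arc of the cycle, claiming the rerouted walk is an alternating path for $\mcM'$. Your argument instead characterizes $Z_{\mcM}(G)$ as the least fixed point of three closure rules, proves a cycle lemma (each cycle of $\mcM\triangle\mcM'$ is entirely inside or entirely outside $Z_{\mcM}(G)$) by propagating the rules around the cycle, and then deduces $Z_{\mcM'}(G)\subseteq Z_{\mcM}(G)$ from minimality, with symmetry giving equality. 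What the paper's approach buys is an explicit, constructive witness: it exhibits the actual alternating path in $\mcM'$ reaching $v$. What your approach buys is robustness: the paper's sketch silently assumes $p\cap C$ is a single contiguous segment (a path may enter and leave the same cycle several times) and does not verify that the rerouted walk is simple or can be shortened to a simple alternating path, whereas your closure argument never manipulates paths at all, so these delicate cases simply do not arise. The one point you should make explicit is why rule (ii) may include matched neighbours and why the least set closed under (i)--(iii) coincides with reachability by simple alternating paths; both are easy inductions (a closure step that revisits a vertex already on the witnessing path can be handled by truncating that path), but they are the load-bearing facts behind your reformulation.
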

\begin{proof}
We first note that the vertices of $U$ saturated by $\mcM$ and $\mcM'$ are the same. We is enough to show that $Z_{\mcM}(G)=Z_{\mcM'}(G)$. Let $C=\mcM\triangle\mcM'$.

Let $u\in U$ be unsaturated and suppose there is some alternating path $p$ from $u$ to $v$ with respect to $\mcM$, i.e. $v\in Z_{\mcM}(G)$. If $p\cap C=\emptyset$, then $v\in Z_{\mcM'}(G)$ trivially. So suppose that $p\cap C\ne\emptyset$.

We assume at first that $p$ intersects a single cycle in $C$. From this case it will be clear how to handle the case that $p$ intersects several cycles in $C$.
In $\mcM$ we may write $p$ as the concatenation of paths $p_0\to p_1\to p_2$ where $p_1=p\cap C$. Now let $\hat{p}_1$ be the path with the same endpoints as $p_1$ but traversing the cycle of $C$ that $p$ intersects in the other way. Then $p_0\to \hat{p}_1\to p_1$ is an alternating path in $\mcM'$.

So again $v\in Z_{\mcM'}(G)$ and it is clear how to extend this argument if $p$ intersect multiple cycles of $C$. Furthermore, all the above arguments work to show that for $v\in Z_{\mcM'}(G)$, we a have that $v\in Z_{\mcM}(G)$.
\end{proof}

\subsection{Enumeratively K\H{o}nig-Egerv\'ary Graphs}

We have seen that not every minimum matching can be obtained by applying K\H{o}nig's procedure to a maximal matching, perhaps contrary to expectation. We find it an interesting question for which graphs can we find all minimum vertex covers from maximal matchings.

\begin{defn}
A bipartite graph is \emph{enumeratively K\H{o}nig-Egerv\'ary} if every minimum vertex cover can be found by applying K\H{o}nig's procedure to a maximal matching.
\end{defn}

Example \ref{ex:p3} shows that not every bipartite graph is enumeratively K\H{o}nig-Egerv\'ary. Furthermore, we do not know a classification of such graphs, and so we leave it as an open question.

However, we do show that are certain set of bipartite graphs are enumeratively K\H{o}nig-Egerv\'ary. Furthermore, this set is very large in the sense that every bipartite graph can be found as a induced subgraph of a graph in this family. As such, when we turn our attention to the problem of classifying which matchings yield minimum vertex covers when K\H{o}nig's procedure is applied, it is really enough to focus our attention on maximal matchings.

\begin{defn}
 Let $3\star$ denote the star graph with three leaves. Given a bipartite graph $H$, let $St(H)$ be the graph formed by attaching a copy of $3\star$ to every vertex of $H$. We call a bipartite graph $G$ \emph{star-studded} if it is of the form $St(H)$ for some bipartite graph $H$.
\end{defn}

We make a couple of important observations about star-studded graphs. First of all, they are clearly bipartite. Secondly, it is a  quick exercise to check if $H=(U,V, E)$ with $|U|\le |V|$ and $St(H)=(X, Y, E')$ with $|X|\le |Y|$, then $U\subset X$ and $V\subset Y$. Thus we apply K\H{o}nig's procedure from the ``same side" in both $H$ and $St(H)$.

We now investigate the conditions under which Procedure \ref{alg:rev_konig} might produce a matching that is not maximal. We know that the matching produced on $G_{\downarrow}\langle\mcC\rangle$ is maximal, so we focus our attention on $G_{\uparrow}\langle\mcC\rangle$.

Throughout this section, $\mcM_{G,\mcC}$ will refer to matching on $G$ found by applying Procedure \ref{alg:rev_konig} to a minimum vertex cover $\mcC$ of $G$. We refer to this matching by the same name even if we are focusing on a subgraph of $G$.

Suppose there are two neighboring vertices $u,v$ in $G_{\downarrow}\langle\mcC\rangle$ that are unsaturated by $\mcM_{G,\mcC}$. Since we cannot add the edge $(u,v)$ to the matching and maintain the property that $K_{\mcM}(G)=\mcC$, we must have that $u$ is the only unsaturated vertex in $U$ from which $v$ can be reached. This further implies that $u$ itself cannot be reached via an alternating path from another unsaturated vertex in $U\setminus\mcC$, else it would be saturated

\begin{lemma}\label{lem:lone-soldier}
If neighbors $u, v\in G_{\uparrow}\langle\mcC\rangle$ are both unsaturated by $\mcM_{G,\mcC}$, then $u$ is the only neighbor $v$ in $G_{\uparrow}\langle\mcC\rangle$.
\end{lemma}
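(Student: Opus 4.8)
The plan is to argue by contradiction, tracing the control flow of Procedure~\ref{alg:rev_konig} rather than appealing only to the identity $K_{\mcM_{G,\mcC}}(G)=\mcC$ from Theorem~\ref{thm:reverse-konig}. Write the two parts of $G_{\uparrow}\langle\mcC\rangle$ as $U\setminus\mcC$ and $V\cap\mcC$, so that the hypothesis places $u\in U\setminus\mcC$ and $v\in V\cap\mcC$. The first thing I would record is a monotonicity fact: since the procedure only ever \emph{adds} edges to the matching, any vertex unsaturated in the final matching $\mcM_{G,\mcC}$ is unsaturated at every intermediate step. In particular, $u$ and $v$ are unsaturated throughout the entire run, a fact I will use repeatedly.

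Next I would assume for contradiction that $v$ has a second neighbor $u'\in U\setminus\mcC$ with $u'\neq u$. The heart of the argument is the claim that $u'$ is \emph{processed} at some moment, i.e. that a $Subroutine$ call is made with $u'$ as its current vertex. I would split this into two cases according to the final status of $u'$: if $u'$ is unsaturated at the end (hence throughout), then the outer loop of $Main$ eventually reaches $u'$ while it is still unsaturated and launches $Subroutine(G_{\uparrow}\langle\mcC\rangle,u',\mcM)$; if instead $u'$ is saturated, then the matching edge incident to $u'$ was created by an \emph{add} step, immediately after which the procedure recurses from the newly matched endpoint, again processing $u'$. Either way $u'$ is processed.

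The contradiction then comes from examining $v$ at that moment. When $u'$ is processed, the loop over its unsaturated neighbors reaches $v$, which is still unsaturated by the monotonicity observation; the procedure then seeks an unsaturated neighbor of $v$ other than the current vertex $u'$ to match against $v$. But $u$ is exactly such a vertex---it neighbors $v$, it differs from $u'$, and it is unsaturated throughout---so the procedure would add an edge saturating $v$, contradicting that $v$ is unsaturated in $\mcM_{G,\mcC}$. This rules out any $u'\neq u$, leaving $u$ as the only neighbor of $v$ in $G_{\uparrow}\langle\mcC\rangle$.

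I expect the main obstacle to be making the ``$u'$ is processed and $v$ is re-examined from $u'$'' step fully rigorous directly from the pseudocode. One must verify that the recursion genuinely revisits every $U$-neighbor of an unsaturated $V$-vertex (so that $v$ is examined from $u'$, not only from $u$), and that the current/root vertex is the only vertex excluded as a matching partner---this exclusion is precisely what lets $u$ stay unsaturated while still forcing $v$ to be matched. These are the bookkeeping invariants of the alternating-tree exploration, and once they are stated cleanly the contradiction above closes at once. A secondary point worth checking is termination, so that ``at some moment $u'$ is processed'' is well defined; this holds because every recursive call is preceded by an \emph{add} step that permanently saturates a new vertex.
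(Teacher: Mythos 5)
Your proposal is correct and takes essentially the same route as the paper's proof: assume $v$ has a second neighbor $u'$ in $G_{\uparrow}\langle\mcC\rangle$, split into cases according to whether $u'$ is saturated by $\mcM_{G,\mcC}$, and in each case argue that Procedure \ref{alg:rev_konig} would have examined $v$ with $u$ available as an unsaturated partner, so $v$ would have been saturated, a contradiction. The only difference is one of bookkeeping: in the saturated case the paper first invokes reachability of $u'$ by an alternating path from some unsaturated $u''$ and then asserts the edge would have been added, whereas you pin down the exact moment this happens (the recursive call launched when $u'$ became matched), which is a sharpening of the same argument rather than a different one.
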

\begin{proof}
Suppose $v$ had another neighbor in $G_{\uparrow}\langle\mcC\rangle$, $u'$. First suppose that $u'$ is unsaturated. Then the edge $(u',v)$ or $(u,v)$ would have been added to $\mcM_{G,\mcC}$ by Procedure \ref{alg:rev_konig} if it had visited $u$ or $u'$ first in the main loop, respectively. This contradicts our assumption that $v$ is unsaturated.

Secondly, suppose that $u'$ is saturated. Then it is reachable via some alternating path from an  unsaturated vertex $u''\in U\setminus \mcC$. But then so is $v$ and $u$ and thus Procedure \ref{alg:rev_konig} would have added $(u, v)$ to $\mcM_{G,\mcC}$, another contradiction.
\end{proof}

As a consequence of Lemma \ref{lem:lone-soldier}, it may be that for some of the $u\in U\setminus \mcC$ we choose for the main loop of Procedure \ref{alg:rev_konig}, these $u$ may have a neighbor whose only other neighbors are in $G_{\downarrow}\langle\mcC\rangle$.

\begin{lemma}\label{lem:mvc-bijection}
If $G=St(H)$  there is a bijection between minimum vertex covers of $H$ and minimum vertex covers of $St(H)$.
\end{lemma}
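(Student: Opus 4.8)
The plan is to analyze the local ``gadget'' that the star contributes at each vertex and to show that a minimum vertex cover of $St(H)$ is rigidly determined on the gadget part, leaving only the freedom to choose a minimum vertex cover of $H$ on the original (``hub'') vertices. Write $\tau(G)$ for the size of a minimum vertex cover of $G$. Recall that attaching a copy of $3\star$ to $w\in V(H)$ identifies $w$ with one leaf of the star, introducing a center $c_w$ adjacent to $w$ together with two further pendant leaves $b_w,d_w$ adjacent to $c_w$; the only new edges are $wc_w$, $c_wb_w$, $c_wd_w$. The crucial combinatorial features are that the sets $\{c_w,b_w,d_w\}$ are pairwise disjoint across $w$ and disjoint from $V(H)$, and that no gadget vertex is incident to an edge of $H$.

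First I would pin down the vertex cover number. For each $w$, the two edges $c_wb_w$ and $c_wd_w$ force at least one vertex of $\{c_w,b_w,d_w\}$ into any cover; and since no gadget vertex touches an $H$-edge, the restriction of any cover to $V(H)$ must itself cover $H$. As the gadget sets and $V(H)$ have disjoint supports, every vertex cover of $St(H)$ has size at least $|V(H)|+\tau(H)$. Conversely $\{c_w : w\in V(H)\}\cup\mcC_H$ is a vertex cover of $St(H)$ for any vertex cover $\mcC_H$ of $H$, so $\tau(St(H))=|V(H)|+\tau(H)$.

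The crux is the rigidity step. If $\mcC$ is a minimum vertex cover of $St(H)$, then both lower bounds above are tight: each gadget contributes exactly one vertex, and $\mcC\cap V(H)$ is a vertex cover of $H$ of size exactly $\tau(H)$, hence a minimum vertex cover of $H$. Because the only single vertex of $\{c_w,b_w,d_w\}$ covering both $c_wb_w$ and $c_wd_w$ is $c_w$ itself, tightness forces $c_w\in\mcC$ and $b_w,d_w\notin\mcC$ for every $w$. Thus $\mcC=\{c_w : w\in V(H)\}\cup(\mcC\cap V(H))$, so $\mcC$ is completely recovered from its trace on $V(H)$.

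Finally I would package this as the bijection. Define $\Phi(\mcC)=\mcC\cap V(H)$ and $\Psi(\mcC_H)=\{c_w : w\in V(H)\}\cup\mcC_H$. The rigidity step shows $\Phi$ sends minimum vertex covers of $St(H)$ to minimum vertex covers of $H$, while the cardinality computation shows $\Psi$ sends minimum vertex covers of $H$ to minimum vertex covers of $St(H)$; the identities $\Phi\circ\Psi=\id$ and $\Psi\circ\Phi=\id$ are immediate from $c_w\notin V(H)$ and the displayed formula for $\mcC$. I expect the rigidity/tightness argument to be the only real obstacle: one must verify that the gadget contributions and the $H$-cover contribution are genuinely independent—disjoint supports and no shared edges—so that equality in the summed lower bound forces the gadget structure exactly, rather than merely constraining the total.
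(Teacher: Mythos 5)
Your proof is correct and follows essentially the same route as the paper's: both hinge on the rigidity observation that every minimum vertex cover of $St(H)$ must contain each star's center and none of its leaves, so that the cover is determined by its restriction to $V(H)$, which is a minimum vertex cover of $H$, and conversely every minimum vertex cover of $H$ lifts uniquely. The only difference is that the paper asserts this rigidity without justification, whereas your counting argument ($\tau(St(H)) = |V(H)| + \tau(H)$, with tightness in the disjoint-support lower bound forcing exactly one gadget vertex per star, necessarily the center) actually proves it --- a more careful rendering of the same idea rather than a different approach.
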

\begin{proof}

Since $G$ is star-studded, we may consider it as $St(H)$ for some bipartite graph $H$. Since $H$ is an induced subgraph, every minimum vertex cover of $G$ restricts to a minimum vertex cover on $H$. We then note that any minimum vertex cover of $G$ includes the centers of all the copies of $3\star$ that we added to $H$ and none of the leaves of $G$. Thus every minimum vertex cover of $H$ lifts uniquely to a minimum vertex cover of $G$.

\end{proof}

\begin{theorem}\label{thm:stars}
Star-studded graphs are enumeratively K\H{o}nig-Egerv\'ary. Furthermore, for a bipartite graph $H$, every minimum vertex cover of $H$ can be found by applying K\H{o}nig's procedure to a maximal matching matching in $St(H)$ and restricting the resulting cover to $H$.
\end{theorem}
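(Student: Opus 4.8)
The plan is to prove the ``furthermore'' assertion, from which the first sentence is immediate. By Lemma~\ref{lem:mvc-bijection}, restriction is a bijection between the minimum vertex covers of $St(H)$ and those of $H$, so it is enough to show that every minimum vertex cover $\mcC$ of $St(H)$ can be realized as $K_{\mcM}(St(H))$ for some \emph{maximal} matching $\mcM$; restricting $\mcC$ to $H$ then returns the corresponding cover of $H$. Theorem~\ref{thm:reverse-konig} already supplies a matching $\mcM=\mcM_{\uparrow}\cup\mcM_{\downarrow}$ with $K_{\mcM}(St(H))=\mcC$, where $\mcM_{\uparrow}=\mcM_{G,\mcC}$ is produced by Procedure~\ref{alg:rev_konig} on $G_{\uparrow}\langle\mcC\rangle$. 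The entire task is therefore to enlarge $\mcM$ to a maximal matching without changing the output of K\H{o}nig's procedure, and the attached copies of $3\star$ are precisely what create the room to do so.

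I would first confine all possible non-maximality to $G_{\uparrow}\langle\mcC\rangle$. Writing $St(H)=(X,Y,E')$ with $U\subset X$ and $V\subset Y$, every center lies in $\mcC$ and every leaf lies outside it; so for each $w\in U$ the center $c_w\in Y\cap\mcC$ and its three leaves in $X\setminus\mcC$ form a copy of $3\star$ inside $G_{\uparrow}\langle\mcC\rangle$, with the edge to $w$ present exactly when $w\in U\setminus\mcC$. Since $\mcM_{\downarrow}$ saturates the cover side $X\cap\mcC$ of $G_{\downarrow}\langle\mcC\rangle$, inside $G_{\downarrow}\langle\mcC\rangle$ every unsaturated vertex has only saturated neighbors, and each cut edge has its $X\cap\mcC$-endpoint saturated; hence no unsaturated adjacent pair can involve $G_{\downarrow}\langle\mcC\rangle$. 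I then claim it suffices to arrange that (a)~every vertex of $U\setminus\mcC$ is saturated and (b)~every center $c_w$ with $w\in U$ is saturated while keeping one of its leaves unmatched. Granting this, the only unsaturated vertices left in $G_{\uparrow}\langle\mcC\rangle$ are unmatched leaves---whose unique neighbor, the center, is saturated---and possibly some $v\in V\cap\mcC$, whose $G_{\uparrow}\langle\mcC\rangle$-neighbors all lie in $U\setminus\mcC$ and are saturated by~(a); thus no two adjacent unsaturated vertices remain and $\mcM$ becomes maximal. (This is consistent with Lemma~\ref{lem:lone-soldier}, which already pins any offending pair down to a pendant-leaf configuration.)

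To achieve (a) and (b) I exploit that each vertex carries its own private $3\star$. Beginning from the matching of Theorem~\ref{thm:reverse-konig}, for each still-unsaturated $u\in U\setminus\mcC$ I add the edge $(u,c_u)$ to its private center, and for each center $c_w$ not saturated in this way I add an edge $(c_w,\ell)$ to one of its leaves. Because the three leaves and the edge to $w$ all belong to a single gadget that is attached at only one vertex, these choices never interfere with one another and always leave at least one leaf of each center unmatched; this surplus is exactly what the third leaf buys us, since a center may be forced to spend one leaf (or be matched to $w$) and still retain an unmatched leaf. Call the resulting matching $\mcM'$.

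The crux---and the step I expect to be hardest---is verifying that $K_{\mcM'}(St(H))=\mcC$ still holds, i.e.\ that $Z_{\mcM'}(St(H))=Z_{\mcM}(St(H))$; note Proposition~\ref{prop:cycle-fiber} does not apply, since I am changing which vertices are saturated rather than taking a symmetric difference of cycles. I would re-establish the two reachability conditions from the proof of Theorem~\ref{thm:reverse-konig}: that every vertex of $X\setminus\mcC$ and every vertex of $Y\cap\mcC$ is reached by an alternating path from an unsaturated vertex of $X\setminus\mcC$. Each unmatched leaf $\ell$ is now such a source, and yields $\ell\to c_w\to\ell'$, keeping $c_w$ in $Z_{\mcM'}$; when a center is matched to its attached vertex, the path continues $\ell\to c_u\to u\to v$ along a non-matching $H$-edge $uv$, so both $u$ and a non-cover neighbor $v\in V\cap\mcC$ survive in $Z_{\mcM'}$. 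Since the leaves are pendant, no alternating path can exit a gadget through a newly inserted edge, so reachability outside the gadgets is untouched. The delicate point is that the newly unmatched leaves must re-source, through such reach-through paths, all of the reachability that the original reverse-K\H{o}nig sources provided---in particular the vertices $v\in V\cap\mcC$ that were formerly reached only through a neighbor in $U\setminus\mcC$ that I have now saturated---and confirming this is exactly where the pendant, three-leaf structure of $3\star$ is used.
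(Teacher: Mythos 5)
Your overall strategy is the same as the paper's: leave $\mcM_{\downarrow}$ untouched, modify the matching only inside the attached copies of $3\star$ so as to saturate the problematic vertices of $G_{\uparrow}\langle\mcC\rangle$, and re-source all alternating-path reachability through unmatched leaves. The paper organizes this as a case analysis driven by Lemma~\ref{lem:lone-soldier}, while you phrase it as the two global saturation conditions (a) and (b), but the local modifications and the reachability argument (prepend $\ell\to c_u\to u$ to the old alternating paths leaving $u$) are the same ones the paper uses.

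There is, however, one step that fails as written: ``for each still-unsaturated $u\in U\setminus\mcC$ I add the edge $(u,c_u)$ to its private center.'' Whenever this step is actually needed, it is an illegal move, because after Procedure~\ref{alg:rev_konig} the center $c_u$ is never unsaturated. Indeed, if $c_u$ were unsaturated, then all three of its leaves would be unsaturated as well (a leaf can only ever be matched to its center), so a leaf $\ell$ and $c_u$ would form an adjacent unsaturated pair; Lemma~\ref{lem:lone-soldier} would then force $\ell$ to be the \emph{only} neighbor of $c_u$ in $G_{\uparrow}\langle\mcC\rangle$, contradicting the fact that $c_u$ has three leaf-neighbors there. (Concretely: the subroutine called from $u$, or from any leaf, matches $c_u$ to a leaf while leaving the source itself unsaturated.) So when $u\in U\setminus\mcC$ survives unsaturated, $c_u$ is already matched to one of its leaves, and $(u,c_u)$ cannot be added to the matching. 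The repair is to \emph{swap} rather than add: delete $(c_u,\ell)$ and insert $(u,c_u)$ --- this is exactly what the paper means by ``changing $\mcM$ on the $3\star$ attached to $u$ to include'' that edge. The displaced leaf becomes one more unsaturated source, every old alternating path out of $u$ starts with a non-matching edge and so is re-sourced by prepending $\ell\to c_u\to u$, and since paths entering a pendant gadget can never leave it, $Z_{\mcM}(G)$ is unchanged; with this correction your verification paragraph goes through. Note also that your non-interference remark only rules out interference between distinct gadgets, whereas the interference that actually occurs is with the matching already sitting inside the gadget; and once the swap is in place, your step for centers ``not saturated in this way'' is vacuous, since every center attached to a $U$-vertex is already saturated by $\mcM_{G,\mcC}$.
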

\begin{proof}
The second part of the theorem  will follow directly from the first part and Lemma \ref{lem:mvc-bijection}. So we need only focus on the first part.

Let $G=St(H)=(U,V,E)$ be star-studded and let $\mcC$ be a minimum vertex cover of $G$ such that Proposition \ref{alg:rev_konig} may produce a non-maximal matching $\mcM$. By Lemma \ref{lem:lone-soldier}, we know that if neighbors $u,v\in G_{\uparrow}\langle \mcC\rangle$ are both unsaturated then $u$ is the unique neighbor of $v$ in $G_{\uparrow}\langle\mcC\rangle$.

We will only change $\mcM$ on the copies of $3\star$ attached to $H$ in order to saturate $u$ and doing it in such a way that does not change the outcome of K\H{o}nig's procedure.

 Suppose $v$ is in $H$. The $3\star$ attached to $v$ has its center in $\mcC$. So $u$ must also be in $H$. Thus we change $\mcM$ on the $3\star$ attached to $u$ to include the edge between $u$ and the center of the $3\star$. We see that this does not change the minimum vertex cover produced by K\H{o}nig's procedure as $u$ is now reachable via an alternating path from the leaves of the $3\star$ attached to it.
 
 Now suppose $v$ is not in $H$, thus it is in one of the attached copies of $3\star$. We do not need to consider the case that the copy of $3\star$ is attached to a vertex in $V$ as that would imply that $v\in G_{\downarrow}\langle\mcC\rangle$ and we know that $\mcM$ restricted to this subgraph is maximal by Lemma \ref{lem:one-sided-mvc}.
 
So suppose that the $3\star$ is attached to an element of $u\in U$. So $v$ is the center of the $3\star$. If $u$ is unsaturated, we change the matching to include $(u,v)$ as in the previous analysis. If $u$ is saturated, we make sure that $v$ is matched with one of the two leafs incident to it in the $3\star$. We see that $u$ is not reachable by alternating path from either of these two leaves so this will not change whether or not $u$ is in $\mcC$.
 
 We see that in this way, we can increase the matching until it is maximal with affecting the minimum vertex cover produced by K\H{o}nig's procedure.
\end{proof}

One the major upshots of Theorem \ref{thm:stars} is that for the rest of the paper, we may restrict our attention to maximal matchings for convenience. Indeed, in the next section we investigate the question of which maximal matchings produce minimum vertex covers. The resulting characterization can be easily extended from maximal matchings to all matchings using Theorem \ref{thm:stars}.

\section{K\H{o}nig's Procedure Applied to Maximal Matchings}\label{sec:classification}

In the previous sections, we have seen that every minimum vertex cover can be obtained by applying K\H{o}nig's procedure to a matching. In fact, we have even described an algorithm to recover matchings from minimum vertex covers and shown that this algorithm is an inverse of sorts to K\H{o}nig's procedure.

However, we still have little idea of what kinds of matchings this reverse procedure can produce. To put it another way, what criteria must a matching satisfy in order to map to a minimum vertex cover via K\H{o}nig's procedure? In this section  we classify those maximal matchings such that $K_{\mcM}(G)$ is a minimum vertex cover. We use Theorem \ref{thm:stars} to justify our focus on maximal matchings rather than arbitrary matchings. 

\begin{lemma}\label{lem:one-endpoint}
If $\mcM$ is matching of a bipartite graph $G=(U,V,E)$, then for every $e \in \mcM$, exactly one endpoint is in $K_{\mcM}(G)$
\end{lemma}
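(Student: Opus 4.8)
The plan is to unwind the definition of $K_{\mcM}(G)$ and reduce the claim to a single clean statement about the reachable set $Z_{\mcM}(G)$. Recall that $K_{\mcM}(G) = (U \setminus Z) \cup (V \cap Z)$, where $Z := Z_{\mcM}(G)$ and, as always, $|U| \le |V|$. Fix a matching edge $e = (u,v)$ with $u \in U$ and $v \in V$. Directly from this description, $u \in K_{\mcM}(G)$ holds exactly when $u \notin Z$, and $v \in K_{\mcM}(G)$ holds exactly when $v \in Z$. Hence ``exactly one endpoint lies in $K_{\mcM}(G)$'' is equivalent to the biconditional $u \in Z \iff v \in Z$: if both $u,v \in Z$ then only $v$ is counted, and if both lie outside $Z$ then only $u$ is counted, whereas the two mixed cases would yield either zero or two endpoints. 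So the entire lemma reduces to proving $u \in Z \iff v \in Z$ for every matching edge $(u,v)$.

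To establish this biconditional I would exploit the parity structure of the alternating paths that define $Z$. Each such path starts at an \emph{unsaturated} vertex of $U$, hence begins with a non-matching edge and alternates thereafter; consequently it can reach a $V$-vertex only along a non-matching edge, and can reach a non-initial $U$-vertex only along a matching edge. For the direction $v \in Z \Rightarrow u \in Z$, I would take an alternating path $P$ from an unsaturated $u_0 \in U$ to $v$; by the parity remark $P$ arrives at $v$ on a non-matching edge, so the matching edge $(u,v)$ is available as an extension. If $u$ already lies on $P$ then $u \in Z$ trivially, and otherwise $P$ followed by $(u,v)$ is an alternating path witnessing $u \in Z$.

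For the reverse direction $u \in Z \Rightarrow v \in Z$, note that $u$ is saturated (it is matched to $v$), so $u$ cannot be the unsaturated root of any path; thus every alternating path reaching $u$ arrives along a matching edge. Since $(u,v)$ is the \emph{unique} matching edge incident to $u$, that path must traverse $v$ immediately before $u$, giving $v \in Z$. Combining the two directions yields the biconditional, and the lemma follows from the set-theoretic form of $K_{\mcM}(G)$.

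The main point requiring care is the parity bookkeeping: pinning down precisely that $V$-vertices are entered on non-matching edges and that non-root $U$-vertices are entered on matching edges, so that the uniqueness of the matching partner at $u$ can be invoked. The only other technicality is the minor one of whether the vertex used to extend a path already lies on it, which is handled trivially since in that case it is already in $Z$. Once these are settled, no further computation is needed.
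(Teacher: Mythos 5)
Your proof is correct and takes essentially the same route as the paper: reduce the lemma to the claim that for each matching edge either both endpoints lie in $Z_{\mcM}(G)$ or neither does, then read off the conclusion from $K_{\mcM}(G)=(U\setminus Z_{\mcM}(G))\cup(V\cap Z_{\mcM}(G))$. The only difference is that the paper dismisses the biconditional as holding ``by construction,'' whereas you prove it explicitly via the parity of alternating paths --- a fuller writeup of the same argument, not a different approach.
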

\begin{proof}
 For every $e\in \mcM$, either both endpoints are in $Z_{\mcM}(G)$ or neither are, by construction. Since $K_{\mcM}(G) = (U\setminus Z_{\mcM}(G))\cup(V\cap Z_{\mcM}(G))$, this proves the lemma.
\end{proof}

\begin{proposition}\label{prop:max-min}
If $\mcM$ is a maximal matching of a bipartite graph $G=(U,V,E)$, then $K_{\mcM}(G)$ is a minimal vertex cover.
\end{proposition}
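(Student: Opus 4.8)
The plan is to prove two things about $K := K_{\mcM}(G)$: first that it is a vertex cover at all, and then that it is inclusion-\emph{minimal}, i.e. that every vertex of $K$ possesses a \emph{private edge} --- an incident edge whose other endpoint lies outside $K$. Throughout I write $Z := Z_{\mcM}(G)$, so that $K = (U\setminus Z)\cup(V\cap Z)$, and I keep in mind the parity of alternating paths: since such a path starts at an unsaturated vertex $u_0\in U$, its edges alternate non-matching, matching, non-matching, $\dots$, so every $U$-vertex on it is entered by a matching edge (or is $u_0$ itself) and every $V$-vertex is entered by a non-matching edge.

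To see $K$ is a vertex cover, I would suppose some edge $(u,v)$, $u\in U$, $v\in V$, has neither endpoint in $K$; then $u\in Z$ and $v\notin Z$. Fix an alternating path $P$ from an unsaturated $u_0$ to $u$. If $(u,v)\in\mcM$ then $v$ must be the matching partner of $u$, hence the predecessor of $u$ on $P$, forcing $v\in Z$; otherwise $(u,v)\notin\mcM$ and appending it to $P$ yields an alternating path reaching $v$ (and if $v$ already lay on $P$ it is in $Z$ anyway), again forcing $v\in Z$. Either way this contradicts $v\notin Z$, so $K$ is a cover.

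For minimality I would treat a vertex $x\in K$ in two cases. If $x\in U\setminus Z$, then $x$ is saturated --- an unsaturated vertex of $U$ would itself lie in $Z$ --- so it has a matching edge $e\in\mcM$, and by Lemma~\ref{lem:one-endpoint} exactly one endpoint of $e$ lies in $K$; since that endpoint is $x$, the partner of $x$ is outside $K$ and $e$ is a private edge. If instead $x\in V\cap Z$, choose an alternating path from an unsaturated $u_0\in U$ to $x$; because $x\in V$ this path has positive length, and the vertex $u'$ immediately preceding $x$ lies in $Z\cap U$, hence $u'\notin K$. The edge $(u',x)$ is then a private edge for $x$. As every vertex of $K$ has a private edge, removing any of them leaves an edge uncovered, so $K$ is minimal.

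The step I expect to require the most care is the $x\in V\cap Z$ case, together with the bookkeeping of alternating-path parity, precisely because $\mcM$ is only assumed \emph{maximal}: augmenting paths of length $\ge 3$ may survive, so a vertex of $V\cap Z$ can be \emph{unsaturated}, and one cannot then locate its private edge through a matching edge as in the other case. This is exactly why the argument for $V\cap Z$ must extract the private neighbour from the alternating path itself rather than from $\mcM$. It is worth remarking that none of the steps above actually invoke maximality, so the conclusion in fact holds for an arbitrary matching; maximality becomes essential only later, when one asks \emph{when} the minimal cover $K_{\mcM}(G)$ is moreover a \emph{minimum} vertex cover.
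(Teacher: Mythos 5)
Your proof is correct and follows essentially the same route as the paper's: your covering argument is the paper's three-case saturation analysis recast as a proof by contradiction (both hinge on the fact that if the $U$-endpoint of an edge lies in $Z_{\mcM}(G)$ then so does its $V$-endpoint), and your minimality argument, like the paper's, handles $U$-vertices of the cover via Lemma~\ref{lem:one-endpoint} and $V$-vertices via the predecessor on an alternating path from an unsaturated vertex. Your closing observation that maximality is never actually invoked --- so that $K_{\mcM}(G)$ is an inclusion-minimal vertex cover for an \emph{arbitrary} matching --- is also correct and mildly sharpens the statement; the paper's proof likewise uses maximality only to remark that no edge has two unsaturated endpoints, a situation its first case already handles.
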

\begin{proof}
 Let $e$ be an edge with endpoint $e_u\in U$ and $e_v\in V$. Because $\mcM$ is maximal, there is no edge $e$ such that both endpoints are not saturated. We consider the following cases:
\begin{enumerate}
    \item $e_u$ is unsaturated.
    \item $e_u$ is saturated but $e_v$ is unsaturated.
    \item Both $e_u$ and $e_v$ are saturated.
\end{enumerate}

If $e_u$ is unsaturated, then $e_v\in K_{\mcM}(G)$ and $e$ is covered. Suppose $e_v$ is unsaturated but $e_u$ is saturated. If $e_u\in K_{\mcM}(G)$, then $e$ is covered. Else, if $e_u\notin K_{\mcM}(G)$, then it is reachable by an alternating path from an unsaturated vertex in $U$, thus so is $e_v$ implying $e_v\in K_{\mcM}(G)$. Thus $e$ is still covered.

The last case to consider is (3). If $e\in \mcM$, then $e$ is covered by Lemma \ref{lem:one-endpoint}. Suppose $e\notin \mcM$ and $e_u\notin K_{\mcM}(G)$. Let $d=(e_u, d_v)\in \mcM$ be the edge saturating $e_u$. Thus $d_v\in K_{\mcM}(G)$ meaning it is reachable by an alternating path from an unsaturated vertex in $U$. This implies that $e_u$ and $e_v$ are also reachable by alternating paths from an unsaturated vertices in $U$ and thus $e_v\in K_{\mcM}(G)$ and so $e$ is covered. Otherwise, $e_u\in K_{\mcM}(G)$ and thus $e$ is still covered.
 
 Now we wish to show that $K_{\mcM}(G)$ is minimal. Let suppose otherwise, which implies there is a vertex $r\in K_{\mcM}(G)$ such that $N(r)\subseteq K_{\mcM}(G)$, where $N(r)$ is the set of neighbors or $r$.
 
 First suppose that $r\in U$. It must be that $r$ is saturated as $r\in K_{\mcM}(G)$, which implies that the vertex matched with $r$ is not in $K_{\mcM}(G)$ by Lemma \ref{lem:one-endpoint}. So we can conclude that $r\in V$. Lemma \ref{lem:one-endpoint} again implies that $r$ cannot be saturated. So there exists an augmenting path $P$ from some unsaturated vertex $u\in U$ to $r$. Let $u'\in U$ be the next to last vertex in $P$ which is in $N(r)$. By the construction of $K_{\mcM}(G)$, $u'\notin K_{\mcM}(G)$ which contradicts our assumption. So $K_{\mcM}(G)$ is minimal.
 
\end{proof}

While Proposition \ref{prop:max-min} guarantees that $K_{\mcM}(G)$ gives a vertex cover, it may not be minimum if $\mcM$ is not maximum. The following example illustrates this.

\begin{example}\label{ex:notminimal}
Consider the following graph with the dashed edges representing the edges not in the matching and the filled edges representing a maximal matching.

\vspace{5pt}
\begin{center}
\begin{tikzpicture}
\draw[fill=black] (0,0) circle (2pt);
\draw[fill=black] (0, 1) circle (2pt);
\draw[fill=black] (.5,.5) circle (2pt);
\draw[dashed] (0,0) -- (.5,.5);
\draw[dashed] (0,1) -- (.5,.5);
\draw (.5,.5) -- (1.5,.5);
\draw[fill=black] (2, 0) circle (2pt);
\draw[fill=black] (2,.5) circle (2pt);
\draw[fill=black] (2,1) circle (2pt);
\draw[fill=black] (1.5,.5) circle (2pt);
\draw[dashed] (1.5,.5) -- (2,.5);
\draw[dashed] (1.5,.5) -- (2,0);
\draw[dashed] (1.5,.5) -- (2,1);
\end{tikzpicture}
\end{center}

Here $|U|=3$ and $|V|=4$. Applying K\H{o}nig's Procedure yields the following vertex cover, where filled vertices belong to the cover and hollow vertices do not.

\vspace{5pt}
\begin{center}
\begin{tikzpicture}
\draw[fill=black] (0,0) circle (2pt);
\draw[fill=black] (0, 1) circle (2pt);
\draw (0,0) -- (.5,.5);
\draw (0,1) -- (.5,.5);
\draw (.5,.5) -- (1.5,.5);
\draw[fill=black] (1.5,.5) circle (2pt);
\draw (1.5,.5) -- (2,.5);
\draw (1.5,.5) -- (2,0);
\draw (1.5,.5) -- (2,1);
\draw [fill=white] (.5,.5) circle (2pt);
\draw [fill=white] (2, 0) circle (2pt);
\draw[fill=white] (2,.5) circle (2pt);
\draw[fill=white] (2,1) circle (2pt);
\end{tikzpicture}
\end{center}

We see that this is indeed a vertex cover, with cardinality four. However, there is clearly a smaller vertex cover of cardinality two, namely the following.
\vspace{5pt}
\begin{center}
\begin{tikzpicture}
\draw[fill=black] (0,0) circle (2pt);
\draw[fill=black] (0, 1) circle (2pt);
\draw (0,0) -- (.5,.5);
\draw (0,1) -- (.5,.5);
\draw (.5,.5) -- (1.5,.5);
\draw[fill=black] (2, 0) circle (2pt);
\draw[fill=black] (2,.5) circle (2pt);
\draw[fill=black] (2,1) circle (2pt);
\draw (1.5,.5) -- (2,.5);
\draw (1.5,.5) -- (2,0);
\draw (1.5,.5) -- (2,1);
\draw[fill=white] (.5,.5) circle (2pt);
\draw[fill=white] (1.5,.5) circle (2pt);
\end{tikzpicture}
\end{center}
\end{example}

Example \ref{ex:notminimal} clearly shows that not all maximal matchings produce minimum vertex covers when applying K\H onig's procedure. As it turns out, the above is an example of a more general phenomenon which characterize all maximal matchings that do not produce minimum vertex covers. This will be shown in Section \ref{sec:classification} by  Proposition \ref{prop:counter_example}.

\subsection{Classifying which maximal matchings map to minimum vertex covers}
 We approach the classification problem by trying to understand the difference in the cardinalities of $K_{\mcM}(G)$ versus $K_{\mcM\triangle P}(G)$ where $P$ is an augmenting path with respect to $\mcM$. It turns out the difference is quite local in the graph.

\begin{defn}
Let $G=(U, V, E)$ be a bipartite graph, $\mcM$ a matching, and $P$ an augmenting path from some unsaturated vertex $u\in U$ to some $v\in V$. Define $G_{\mcM}\langle P\rangle$ be the subgraph of $G$ of all augmenting paths starting from some unsaturated vertex in $U$ that edge-wise intersect $P$.
\end{defn}

\begin{defn}
Given a matching $\mcM$ and a bipartite graph $G=(U,V,E)$, if $H=(V_H,E_H)$ is a subgraph then define 
\begin{itemize}
    \item $K_{\mcM}(H):=K_{\mcM}(G)\cap V_H,$
\item$ Z_{\mcM}(H):=Z_{\mcM}(G)\cap V_H.$
\end{itemize}

\end{defn}
\begin{observation}\label{obs:decomposition}
Given an augmenting path $P$, Let $P_1=P, \dots, P_k$ be all augmenting paths from some unsaturated vertex in $U$ that edge-wise intersect $P$. Then
$$G_{\mcM}\langle P\rangle=\bigcup_{i=1}^{n}{P_i},$$
$$K_{\mcM}(G_{\mcM}\langle P\rangle) = \bigcup_{i=1}^n{K_{\mcM}(P_i)}.$$
\end{observation}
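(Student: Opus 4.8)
The plan is to observe that both displayed equalities are formal consequences of the definitions, so the argument is short; the only point requiring care is to use the stipulated meaning $K_{\mcM}(H) = K_{\mcM}(G)\cap V_H$ rather than attempting to rerun K\H{o}nig's procedure on the subgraph $G_{\mcM}\langle P\rangle$ from scratch.

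First I would dispatch the equality $G_{\mcM}\langle P\rangle = \bigcup_{i=1}^{k} P_i$ by unwinding the definition. By construction, $G_{\mcM}\langle P\rangle$ is the subgraph of $G$ spanned by the edges and vertices lying on \emph{some} augmenting path from an unsaturated vertex of $U$ that edge-wise intersects $P$. Since $P_1=P,\dots,P_k$ are defined to be \emph{all} such augmenting paths (and $P$ edge-wise intersects itself, so it is among them), the subgraph they jointly span is exactly $G_{\mcM}\langle P\rangle$. Concretely, the vertex set of $G_{\mcM}\langle P\rangle$ is $\bigcup_i V_{P_i}$ and its edge set is $\bigcup_i E_{P_i}$, where $V_{P_i}$, $E_{P_i}$ denote the vertices and edges of $P_i$.

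For the second equality I would combine the first with the definition of $K_{\mcM}$ on subgraphs. The first equality gives $V_{G_{\mcM}\langle P\rangle} = \bigcup_{i=1}^{k} V_{P_i}$, and intersection distributes over union, so
\begin{align*}
K_{\mcM}(G_{\mcM}\langle P\rangle) &= K_{\mcM}(G)\cap V_{G_{\mcM}\langle P\rangle} = K_{\mcM}(G)\cap \bigcup_{i=1}^{k} V_{P_i} \\
&= \bigcup_{i=1}^{k}\bigl(K_{\mcM}(G)\cap V_{P_i}\bigr) = \bigcup_{i=1}^{k} K_{\mcM}(P_i),
\end{align*}
which is exactly the claim.

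Since both steps are purely formal, I do not anticipate a genuine obstacle. The one place to stay alert is the passage from ``subgraph'' to ``union of vertex sets'': one must confirm that the vertex set of $G_{\mcM}\langle P\rangle$ is genuinely $\bigcup_i V_{P_i}$, so that distributivity applies verbatim. This holds because each $P_i$ is a path and a finite union of paths has vertex set equal to the union of the constituent vertex sets. I would also flag the harmless index mismatch in the statement (the count is written as $k$ in the definition of the $P_i$ but as $n$ under the unions): these are to be read as the same finite integer, namely the number of augmenting paths from unsaturated $U$-vertices that edge-wise intersect $P$.
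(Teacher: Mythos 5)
Your proof is correct and matches the paper's (implicit) reasoning: the paper states this as an Observation with no proof precisely because both equalities follow directly from the definition of $G_{\mcM}\langle P\rangle$ as the subgraph formed by all such augmenting paths and from the convention $K_{\mcM}(H)=K_{\mcM}(G)\cap V_H$, exactly as you argue via distributivity of intersection over union. Your side remarks (that $P$ itself is among the $P_i$, and that $k$ versus $n$ is a harmless index typo) are accurate and need no further work.
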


\begin{observation}\label{obs:edge-means-vertex}
If two alternating paths intersect vertex-wise, they either intersect edge-wise or intersect only at their endpoints.
\end{observation}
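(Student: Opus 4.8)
The plan is to argue entirely from the local structure of $\mcM$ at a shared vertex. The key preliminary observation I would establish is that any \emph{interior} vertex $w$ of an alternating path is incident, within that path, to exactly one edge of $\mcM$. Indeed, an interior vertex has precisely two incident path-edges, and these are consecutive in the path, so the alternating condition (``every other edge is in $\mcM$'') forces exactly one of them to lie in $\mcM$. In particular, every interior vertex of an alternating path is saturated by $\mcM$.

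Next I would take two alternating paths $P$ and $Q$ sharing a vertex $w$ and suppose, toward a contradiction, that $w$ is interior to \emph{both} $P$ and $Q$. By the preliminary observation, $w$ is saturated and $P$ contains the edge of $\mcM$ incident to $w$; likewise $Q$ contains an edge of $\mcM$ incident to $w$. But since $\mcM$ is a matching, there is exactly one edge of $\mcM$ incident to the saturated vertex $w$. Hence $P$ and $Q$ contain the \emph{same} matched edge at $w$, so they intersect edge-wise.

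Taking the contrapositive then yields the statement: if $P$ and $Q$ do not intersect edge-wise, no common vertex can be interior to both paths, so every common vertex must be an endpoint of at least one of $P$ and $Q$, i.e.\ the two paths meet only at endpoints. I would also note that for augmenting paths the dichotomy is even sharper, since there the endpoints are exactly the unsaturated vertices and the interior vertices are exactly the saturated ones: a shared saturated vertex is then interior to both and forces an edge-wise intersection, whereas a shared unsaturated vertex is necessarily an endpoint of both.

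I expect the only genuine subtlety to be the bookkeeping around the word ``endpoint'' — specifically, confirming that once edge-wise intersection is excluded, a vertex interior to one path may still coincide with an endpoint of the other, so the correct reading is ``every shared vertex is an endpoint of at least one of the two paths.'' The core mechanism, namely uniqueness of the matched edge at a saturated vertex, is immediate from the definition of a matching, so there is no real computational obstacle; the work lies only in stating the case analysis precisely.
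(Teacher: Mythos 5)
Your proof is correct: the paper states this as an observation with no proof at all, and your argument --- that an interior vertex of an alternating path carries its unique matched edge of $\mcM$ inside the path, so a vertex interior to both paths forces a shared matched edge --- is exactly the implicit justification, including your correct reading that a common vertex need only be an endpoint of \emph{at least one} of the two paths (endpoint-of-both is false in general, though, as you note, it does hold when both paths are augmenting). Nothing further is needed.
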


We claim that it is enough to study the cardinalities of sets of the form $K_{\mcM}(G_{\mcM}\langle P\rangle)$ to understand when $K_{\mcM}(G)$ gives a minimum vertex cover.

\begin{theorem}\label{thm:technical_theorem}
Suppose $r$ is a vertex in $G$ that is not in $G_{\mcM}\langle P\rangle$ for a given augmenting path $P$, then $r\in K_{\mcM}(G)$ if and only if $r$ or the vertex it is matched to in $\mcM$ is in $K_{\mcM\triangle P}(G)$.
\end{theorem}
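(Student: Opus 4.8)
The plan is to recast everything through the reachability sets, since by definition $K_{\mcM}(G)=(U\setminus Z_{\mcM}(G))\cup(V\cap Z_{\mcM}(G))$ and the analogous identity holds for $\mcM\triangle P$; membership of any vertex in either cover is thus decided purely by whether it lies in $Z_{\mcM}(G)$ or in $Z_{\mcM\triangle P}(G)$. The first step I would record is a persistence fact: since $P\subseteq G_{\mcM}\langle P\rangle$, the hypothesis $r\notin G_{\mcM}\langle P\rangle$ forces $r\notin P$, and if $r$ is saturated by $\mcM$ then its partner $r'$ also lies off $P$ — on an augmenting path every internal vertex is matched to its neighbour on the path, so $r'\in P$ would drag $r$ onto $P$. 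Consequently the edge $(r,r')$ is untouched by the augmentation, i.e. $(r,r')\in\mcM\cap(\mcM\triangle P)$.

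This already settles the saturated case in the form I would actually prove. Applying Lemma \ref{lem:one-endpoint} to the matching $\mcM\triangle P$, of which $(r,r')$ is still an edge, shows that exactly one of $r,r'$ lies in $K_{\mcM\triangle P}(G)$, so the matched edge at $r$ is covered in the new cover exactly as it was in the old, which is the content of the claim for saturated $r$. The substantive work is the reachability comparison in the unsaturated case. The key device is that any witness that an unsaturated $r\in V$ lies in $Z_{\mcM}(G)$ is an alternating path $Q$ from an unsaturated $u_1\in U$ to $r$; since both ends are unsaturated, $Q$ is in fact augmenting. Were $Q$ to meet $P$ edge-wise, then $Q\subseteq G_{\mcM}\langle P\rangle$ and hence $r\in G_{\mcM}\langle P\rangle$, against hypothesis. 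So $Q$ avoids every edge of $P$, its matched/unmatched pattern is unchanged in $\mcM\triangle P$, and $Q$ stays alternating there — the one gap being whether its basepoint is still unsaturated. I would run the identical argument backwards for the converse inclusion, noting the convenient asymmetry that the unsaturated $U$-vertices of $\mcM\triangle P$ are just those of $\mcM$ with $u_0$ removed, so the backwards direction carries no basepoint issue; throughout I would use Observations \ref{obs:decomposition} and \ref{obs:edge-means-vertex} to guarantee that two alternating paths sharing vertices either share an edge or meet only at an endpoint, which keeps the bookkeeping finite.

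The hard part will be exactly this endpoint subtlety. A reachability witness $Q$ may meet $P$ not along an edge but only at the shared basepoint $u_0$, which is unsaturated under $\mcM$ but saturated under $\mcM\triangle P$; since both $u_0$ and the far endpoint $v_0$ of $P$ become saturated, such a $Q$ can lose every unsaturated basepoint and the reachability of $r$ can genuinely move. The crux is therefore to show that precisely those vertices are already absorbed into $G_{\mcM}\langle P\rangle$ — that sharing an endpoint of $P$ suffices to place an augmenting path inside $G_{\mcM}\langle P\rangle$ — while dually the only vertices outside $G_{\mcM}\langle P\rangle$ whose status can move are saturated ones, whose matched edge merely shifts which endpoint it is covered at but remains covered, as guaranteed by Lemma \ref{lem:one-endpoint}. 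Here I would splice in the reversed copy of $P$, exactly as in the cycle-rerouting argument of Proposition \ref{prop:cycle-fiber}, to re-base an alternating path after augmentation, and then check that the spliced path is simple and genuinely alternating at the splice. Pinning down this dichotomy, and confirming that no vertex disjoint from $G_{\mcM}\langle P\rangle$ slips through the accounting, is where I expect essentially all of the difficulty to lie.
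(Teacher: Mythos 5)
Your overall architecture (deciding membership through the $Z$-sets, splitting on whether $r$ is saturated, transplanting witness paths, and using Lemma \ref{lem:one-endpoint} on matched pairs) parallels the paper's proof, and your ``persistence fact'' that a matched edge at $r$ survives the augmentation is correct. But the step you defer as ``the crux'' --- that an augmenting path meeting $P$ only at its basepoint is already absorbed into $G_{\mcM}\langle P\rangle$ --- cannot be proved, because it is false under the paper's definition: $G_{\mcM}\langle P\rangle$ is the union of augmenting paths that \emph{edge-wise} intersect $P$, and a path sharing only the vertex $u$ with $P$ shares no edge with it. Concretely, take $U=\{u,b_1,b_2\}$, $V=\{a_1,c_1,a_2,c_2\}$, edges $ua_1$, $a_1b_1$, $b_1c_1$, $ua_2$, $a_2b_2$, $b_2c_2$, and $\mcM=\{a_1b_1,\,a_2b_2\}$. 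The two augmenting paths $P=ua_1b_1c_1$ and $Q=ua_2b_2c_2$ are edge-disjoint, so $G_{\mcM}\langle P\rangle=P$ and $r=c_2\notin G_{\mcM}\langle P\rangle$. Here $r\in K_{\mcM}(G)$ (it lies in $V$ and is reachable from $u$) and $r$ is unmatched, yet after augmenting along $P$ no unsaturated vertex of $U$ remains, so $K_{\mcM\triangle P}(G)=U$ and $r\notin K_{\mcM\triangle P}(G)$: the reachability of $r$ ``genuinely moves'', exactly the scenario you flagged, and no absorption argument can rescue it. (Note this is different from the paper's Example \ref{ex:notminimal}, where all augmenting paths share the central matched edge, so nothing lies outside $G_{\mcM}\langle P\rangle$.)

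To your credit, you have located precisely the soft spot: the paper's own proof glosses over the same point, since in its second subcase it appeals to ``our assumptions that $r\notin G_{\mcM}\langle Q\rangle$ for \emph{any} augmenting path $Q$'', which is strictly stronger than the stated hypothesis ($r\notin G_{\mcM}\langle P\rangle$ for the given $P$); under that stronger hypothesis --- equivalently, if $G_{\mcM}\langle P\rangle$ were defined via vertex-wise intersection --- your absorption step becomes true and your outline can be completed. Two further discrepancies with the paper's argument are worth recording. First, your saturated case only establishes that the matched edge $(r,r')$ is covered exactly once by each of $K_{\mcM}(G)$ and $K_{\mcM\triangle P}(G)$; that count-preserving statement is what Corollary \ref{thm:sufficient-for-mvc} actually consumes, but it is weaker than the stated biconditional, whereas the paper proves the sharper transfer statement that, when every witness for $r$ starts at $u$, $r\in K_{\mcM\triangle P}(G)$ if and only if its partner is in $K_{\mcM}(G)$. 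Second, ``running the identical argument backwards'' is not symmetric: a witness path for $\mcM\triangle P$ may share edges with $P$ without violating any hypothesis (the hypothesis constrains only $\mcM$-augmenting paths), and ruling this out is exactly the ordering/splicing argument with the two intersection vertices that occupies the second half of the paper's proof; your proposed splice in the style of Proposition \ref{prop:cycle-fiber} is the right tool there, but it is where the work lies, not a routine check.
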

\begin{proof}
We first note that $r$ cannot be an unsaturated vertex of $U$. Now
there are two cases we consider: the first is that $r$ is reachable by an alternating path from some unsaturated vertex $u'\in U$. Let $P$ be an augmenting path from $u$ to $v$. The assumptions of the theorem implies that $r\ne u,v$. We consider two further subcases. First suppose that $u'\ne u$ and let $Q$ be the alternating path $Q$ from $u'$ to $r$.  By assumption and Observation \ref{obs:edge-means-vertex}, $Q$ does not vertex-wise intersect $P$. Therefore, $Q$ is an alternating path from an unsaturated vertex in $u'$ to $r$ in $\mcM\triangle P$. Thus $r\in K_{\mcM\triangle P}(G)$ if and only if $r\in K_{\mcM}(G)$.

Now suppose every alternating path from some unsaturated vertex in $U$ to $r$ must begin at $u$. Then in $\mcM\triangle P$ there is no alternating path from an unsaturated vertex in $U$ to $r$. In this case $r\in K_{\mcM\triangle P}(G)$ if and only if the vertex matched to $r$ in $\mcM$ is in $K_{\mcM}(G)$. We claim that $r$ is matched in $\mcM$. Note that if $r\in V$, then if $r$ is unsaturated, it violates our assumptions that $r\notin G_{\mcM}\langle Q\rangle$ for any augmenting path $Q$. If $r\in U$, then $r$ unsaturated implies we are in the case handled above.

Now suppose that $r$ is not reachable by any alternating path from an unsaturated vertex in $U$ with respect to $\mcM$ and $P$ be as before. We wish to show this remains true with respect to $\mcM\triangle P$ as well. Let us suppose the opposite. Let $Q$ be the alternating path to $r$ from some unsaturated $u'\in U$ with respect to $\mcM\triangle P$.

We clearly have that $P$ and $Q$ edge-wise intersect since $r$ is not reachable by any alternating path from an unsaturated vertex in $U$ with respect to $\mcM$.
 The vertices of $P$ are considered ordered from $u$ to $v$ and the vertices of $Q$ ordered from $u'$ to $r$. Furthermore, we say that the \emph{higher end} of an edge in $P$ or $Q$ is the greater vertex with respect to this order. We likewise define the \emph{lower end}. There are to cases to consider: the first is when the ordering of vertices of $Q$ agrees with the vertex ordering of vertices of $P$ on their intersection and the second where the ordering on the intersection is reversed.
 
 With respect to the vertex ordering of $P$, let $\iota_1$ be the first vertex where $P$ and $Q$ intersect and $\iota_2$ be the second. Since $Q$ is an alternating path in $\mcM\triangle P$, this implies that the edge of $P$ whose higher end is $\iota_1$ is in $\mcM$. Similarly, the edge of $P$ whose lower end is $\iota_2$ is also in $\mcM$.
 
 Now suppose that the vertex orderings of $P$ and $Q$ agree on their intersection. This implies that the concatenation of the path from $u$ to $\iota_1$ along $P$ and the path from $\iota_1$ to $u'$ along $Q$ (in the reverse direction) is an alternating path with respect to $\mcM$ connecting two unsaturated vertices of $U$. This is clearly impossible. 
 
 If the vertex orderings of $P$ and $Q$ do not agree on their intersection, then the path from $u$ to $\iota_1$ via $P$ concatenated with the path from $\iota_1$ to $r$ along $Q$ (in the reverse direction), gives an alternating path from $u$ to $r$ edge-wise intersecting $P$, contradicting our assumption that $r\notin G_{\mcM}\langle P\rangle$. 
 
 Thus $r$ is not reachable via any alternating path from an unsaturated vertex in $U$ with respect to $\mcM\triangle P$ and so $r\in K_{\mcM}(G)$ if and only if $r\in K_{\mcM\triangle P}(G)$. This concludes the proof
\end{proof}
\begin{corollary}\label{thm:sufficient-for-mvc}
Let $G=(U,V,E)$ be a bipartite graph and $\mcM$ be a matching. Then for any augmenting path $P$, $$|K_{\mcM}(G_{\mcM}\langle P\rangle)| = |K_{\mcM\triangle P}(G_{\mcM}\langle P\rangle)| \implies |K_{\mcM}(G)| = |K_{\mcM\triangle P}(G)|.$$ In particular, if $\mcM$ is maximal, then $K_{\mcM}(G)$ is a minimum vertex cover if for every augmenting path $P$, $$|K_{\mcM}(G_{\mcM}\langle P\rangle)|=|K_{\mcM\triangle P}(G_{\mcM}\langle P\rangle)|.$$
\end{corollary}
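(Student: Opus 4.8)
The plan is to prove the implication first and then extract the ``in particular'' statement by augmenting up to a maximum matching. For the implication, set $W := V(G)\setminus V(G_{\mcM}\langle P\rangle)$ and split each cover across this partition:
$$|K_{\mcM}(G)| = |K_{\mcM}(G_{\mcM}\langle P\rangle)| + |K_{\mcM}(G)\cap W|,$$
and likewise for $\mcM\triangle P$. The hypothesis equates the first summands, so it suffices to show $|K_{\mcM}(G)\cap W| = |K_{\mcM\triangle P}(G)\cap W|$. First I would check that $\mcM$ and $\mcM\triangle P$ agree on $W$: since $P\subseteq G_{\mcM}\langle P\rangle$, no edge of $P$ meets $W$, and an $\mcM$-edge incident to a vertex of $W$ must have both endpoints in $W$ — otherwise its saturated endpoint would be an internal vertex of an augmenting path that edge-wise intersects $P$, forcing that whole matched edge, and hence the $W$-endpoint, into $G_{\mcM}\langle P\rangle$. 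Thus the matched edges lying inside $W$ are common to both matchings.

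Now I would count the contribution of $W$ by separating matched and unsaturated vertices. Each matched edge inside $W$ contributes exactly one endpoint to $K_{\mcM}(G)$ and exactly one to $K_{\mcM\triangle P}(G)$ by Lemma \ref{lem:one-endpoint}, and since the matched edges coincide these contributions are equal. For an unsaturated vertex $r\in W$, Theorem \ref{thm:technical_theorem} applies since $r\notin G_{\mcM}\langle P\rangle$; as $r$ has no $\mcM$-partner its conclusion reduces to $r\in K_{\mcM}(G)\iff r\in K_{\mcM\triangle P}(G)$, while unsaturated vertices of $U$ lie in neither cover and contribute zero to both. Summing over $W$ gives the desired equality, hence $|K_{\mcM}(G)| = |K_{\mcM\triangle P}(G)|$.

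For the ``in particular'' statement, suppose $\mcM$ is maximal and the local equality holds for every augmenting path. By Proposition \ref{prop:max-min}, $K_{\mcM}(G)$ is already a vertex cover, so it is enough to show that $|K_{\mcM}(G)|$ equals the size of a maximum matching $\mcM^{*}$, which by Theorem \ref{thm:konig} is the minimum cover size. I would fix $\mcM^{*}$, discard the cycles of $\mcM\triangle\mcM^{*}$ (harmless by Proposition \ref{prop:cycle-fiber}), and augment $\mcM$ to $\mcM^{*}$ one vertex-disjoint augmenting path at a time, $\mcM = \mcM_0, \mcM_1, \dots, \mcM_k = \mcM^{*}$. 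The implication just proved gives $|K_{\mcM_0}(G)| = |K_{\mcM_1}(G)|$, and since $K_{\mcM^{*}}(G)$ is a minimum vertex cover of size $|\mcM^{*}|$, telescoping would yield $|K_{\mcM}(G)| = |\mcM^{*}|$ once every step is justified.

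The \emph{main obstacle} is precisely this telescoping: the hypothesis is only assumed for augmenting paths of $\mcM$, whereas the $i$-th step augments $\mcM_{i-1}$, so I must verify that the equality persists at each intermediate matching. I expect to control this through the locality furnished by Theorem \ref{thm:technical_theorem} and Observation \ref{obs:decomposition}: augmenting along $P_i$ alters the cover only inside $G_{\mcM_{i-1}}\langle P_i\rangle$, and the vertex-disjointness of the chosen paths should prevent these regions from interfering, so that the equality for $P_i$ relative to $\mcM_{i-1}$ descends from the corresponding equality relative to $\mcM$. Making this propagation rigorous — ruling out that an earlier augmentation changes the reachability of unsaturated vertices of $V$ lying near a later path — is the delicate point, and it is where I would concentrate the technical effort.
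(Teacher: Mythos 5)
Your proof of the displayed implication is correct, and it is essentially the paper's own argument written out in full: the paper compresses it into one sentence (``Theorem \ref{thm:technical_theorem} plus our assumptions imply $|K_{\mcM}(G)|=|K_{\mcM\triangle P}(G)|$ by Lemma \ref{lem:one-endpoint}''), and your decomposition across $W=V(G)\setminus V(G_{\mcM}\langle P\rangle)$ --- checking that $\mcM$ and $\mcM\triangle P$ have the same edges meeting $W$, counting saturated vertices of $W$ via Lemma \ref{lem:one-endpoint}, and handling unsaturated vertices via Theorem \ref{thm:technical_theorem} (with unsaturated $U$-vertices lying in neither cover) --- is exactly the intended reading, carried out more carefully than the paper does.

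The gap is in the ``in particular'' half, and you have named it yourself: the hypothesis concerns augmenting paths of $\mcM$, whereas your telescoping $\mcM=\mcM_0,\mcM_1,\dots,\mcM_k$ needs the local equality for augmenting paths of each intermediate $\mcM_i$, and you never prove that the hypothesis propagates. Only the first step $|K_{\mcM_0}(G)|=|K_{\mcM_1}(G)|$ is justified by what you proved. Vertex-disjointness of the components of $\mcM\triangle\mcM^{*}$ does not rescue the later steps: flipping $P_1$ changes which vertices are unsaturated and reverses the matched/unmatched status of the edges along $P_1$, so alternating reachability through those vertices changes, and $G_{\mcM_1}\langle P_i\rangle$ need not coincide with $G_{\mcM}\langle P_i\rangle$; hence the hypothesis for $\mcM$ does not formally transfer to $\mcM_1$. (A lesser quibble: after performing all augmenting-path components of $\mcM\triangle\mcM^{*}$ you reach a maximum matching that may still differ from $\mcM^{*}$ by cycles and balanced alternating paths, but this is harmless, since K\H{o}nig's procedure applied to any maximum matching gives a cover of minimum size by Theorem \ref{thm:konig}; Proposition \ref{prop:cycle-fiber} is not really needed.) You should know, however, that the paper's own proof does not close this hole either: it consists of the single sentence ``we may proceed inductively by extending augmenting paths until we get a maximum matching,'' with no argument that the hypothesis survives augmentation, followed by an appeal to Proposition \ref{prop:max-min}. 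So you have not missed an idea that is present in the paper; you have located precisely the point at which the paper is hand-waving, and an honest completion of either argument must supply this propagation step (for instance by showing that the structural condition appearing later in Theorem \ref{thm:main-classification}, via Proposition \ref{prop:unique-ends}, is preserved under augmentation).
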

\begin{proof}
 Theorem \ref{thm:technical_theorem} plus our assumptions imply that $|K_{\mcM}(G)|=|K_{\mcM\triangle P}(G)|$ by Lemma \ref{lem:one-endpoint}. If $\mcM$ is maximal, we may proceed inductively by extending augmenting paths until we get a maximum matching $\tilde{\mcM}$. Proposition \ref{prop:max-min} then implies that $K_{\mcM}(G)$ is a minimum vertex cover.
\end{proof}

Theorem \ref{thm:technical_theorem} and Corollary \ref{thm:sufficient-for-mvc} give us the tools we need to investigate how we may extend the number of maximal matchings we can use to form a minimum vertex cover.

\begin{proposition}\label{prop:unique-ends}
Let $\mcM$ be a maximal matching of a bipartite graph $G=(U,V,E)$ with vertices $u\in U$ and $v\in V$ connected by an augmenting path $P$. Suppose that $u$ is the only unsaturated vertex of $U$ in $G_{\mcM}\langle P\rangle$. Then $K_{\mcM}(G)=K_{\mcM\triangle P}(G).$
\end{proposition}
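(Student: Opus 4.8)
The plan is to prove the statement through the cardinality identity $|K_{\mcM}(G)| = |K_{\mcM\triangle P}(G)|$, reducing the whole problem to a local computation on $H := G_{\mcM}\langle P\rangle$ via Corollary \ref{thm:sufficient-for-mvc}. Thus the single thing I would actually establish is
$$|K_{\mcM}(H)| = |K_{\mcM\triangle P}(H)|,$$
after which the Corollary delivers the global equality immediately. Write $\mcM' := \mcM\triangle P$, and let $H_U$ and $H_V$ denote the vertices of $H$ lying in $U$ and in $V$ respectively, so that $|K_N(H)| = |H_U| - |H_U\cap Z_N| + |H_V\cap Z_N|$ for any matching $N$, directly from the definition $K_N(H) = K_N(G)\cap V_H$.

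First I would extract the structural consequence of the hypothesis. Because $u$ is the only unsaturated vertex of $U$ inside $H$, every augmenting path making up $H$ must begin at $u$; together with the fact that $v$ is the corresponding unique unsaturated vertex of $V$ in $H$ (the second ``end'' that gives Proposition \ref{prop:unique-ends} its name), this pins down the unsaturated vertices of $H$ with respect to $\mcM$ to be exactly $u\in H_U$ and $v\in H_V$ — precisely one on each side. Every remaining vertex of $H$ is an interior vertex of some augmenting path and is therefore matched by $\mcM$ to its path-neighbour, hence matched \emph{within} $H$. Consequently, augmenting along $P$ saturates $u$ and $v$ too, so that $\mcM'$ restricts to a \emph{perfect} matching of $H$.

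The engine of the count is the principle behind Lemma \ref{lem:one-endpoint}: for any matching $N$ and any edge $e=(a,b)\in N$, either both $a,b\in Z_N(G)$ or neither, since the matching edge can always be appended to an alternating path. Hence whenever $N$ restricts to a perfect matching of $H$, the edges of $N$ biject $H_U\cap Z_N$ with $H_V\cap Z_N$, giving $|H_U\cap Z_N| = |H_V\cap Z_N|$ and therefore $|K_N(H)| = |H_U|$. Applying this to $N=\mcM'$ yields $|K_{\mcM'}(H)| = |H_U|$ at once. For $N=\mcM$ the same pairing covers all matched vertices; the two unmatched vertices $u$ and $v$ both lie in $Z_{\mcM}$ ($u$ is a source and $v$ is reached along $P$), and since one sits in $H_U$ and the other in $H_V$ they contribute equally to the two intersection counts, so again $|K_{\mcM}(H)| = |H_U|$. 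This gives $|K_{\mcM}(H)| = |K_{\mcM'}(H)|$, and Corollary \ref{thm:sufficient-for-mvc} completes the proof.

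The step I expect to be the crux is the claim that $\mcM'$ saturates \emph{all} of $H$, as everything downstream rests on $\mcM'|_H$ being a perfect matching and on there being exactly one unsaturated $\mcM$-vertex of $H$ on each side. This is exactly where the uniqueness of the unsaturated ends is indispensable: if $H$ contained a second unsaturated $V$-vertex $v'\neq v$ (equivalently, a second augmenting path out of $u$ ending elsewhere), then $\mcM'$ would leave $v'$ unsaturated, the $\mcM$-pairing would inherit an unmatched surplus vertex in $H_V\cap Z_{\mcM}$, and one would instead find $|K_{\mcM}(H)| = |H_U| + (\#\text{unsaturated } V\text{-ends of } H - 1)$, strictly exceeding $|K_{\mcM'}(H)|$. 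So the real content of the argument is confirming that the hypothesis isolates a single unsaturated vertex on each side of $G_{\mcM}\langle P\rangle$, which is what makes the two residual contributions cancel.
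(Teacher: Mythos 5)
Your argument rests on a claim you never prove and that does not follow from the stated hypothesis: that $v$ is the \emph{unique} unsaturated vertex of $V$ in $H=G_{\mcM}\langle P\rangle$ (equivalently, that $\mcM\triangle P$ restricts to a perfect matching of $H$). The hypothesis constrains only the $U$-side; $H$ is the union of \emph{all} augmenting paths from unsaturated vertices of $U$ that edge-wise intersect $P$, and nothing prevents two such paths from the same $u$ from ending at different unsaturated vertices of $V$. Concretely, delete one of the two left-hand vertices in Example \ref{ex:notminimal}: take $U=\{u,b\}$, $V=\{a,v,v'\}$, edges $(u,a),(a,b),(b,v),(b,v')$, and the maximal matching $\mcM=\{(a,b)\}$. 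With $P=(u,a,b,v)$, the subgraph $G_{\mcM}\langle P\rangle$ is all of $G$ and $u$ is its only unsaturated $U$-vertex, so the hypothesis of the proposition holds; yet $K_{\mcM}(G)=\{a,v,v'\}$ has size $3$ while $K_{\mcM\triangle P}(G)=\{u,b\}$ has size $2$. So the ``fact'' you lean on is not merely unverified --- it is unprovable from the stated hypothesis, being exactly the extra assumption (one unsaturated end on each side) that separates this proposition from Proposition \ref{prop:counter_example}. Your own closing formula, $|K_{\mcM}(H)|=|H_U|+(k-1)$ with $k$ the number of unsaturated $V$-ends of $H$, shows you understood this is where the argument lives; but you then assumed the crux instead of confirming it, and no confirmation is possible.

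In fairness, the paper's own proof commits the same sin, only silently: it writes $G_{\mcM}\langle P\rangle=\bigcup_{i=1}^n P_i$ where $P_1,\dots,P_n$ are ``the augmenting paths from $u$ to $v$,'' which presumes every augmenting path in $G_{\mcM}\langle P\rangle$ ends at $v$; in the example above the path $(u,a,b,v')$ lies in $G_{\mcM}\langle P\rangle$ but in no $P_i$. Granting that missing hypothesis, your counting is correct and genuinely different from the paper's: the paper decomposes $G_{\mcM}\langle P\rangle$ into $P$ and the odd-length components of $P_i\setminus P$ and tracks how K\H{o}nig's procedure flips sides on each piece, whereas you use the both-or-neither principle behind Lemma \ref{lem:one-endpoint} to force $|K_N(H)|=|H_U|$ for any matching $N$ that pairs $H$ up internally --- shorter, and it avoids the paper's delicate claim about which vertices of $P_i\setminus P_1$ remain reachable after augmenting. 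One further caution applies to both you and the paper: the argument delivers only the cardinality identity $|K_{\mcM}(G)|=|K_{\mcM\triangle P}(G)|$ (Corollary \ref{thm:sufficient-for-mvc} transfers nothing more), while the proposition asserts equality of the two covers as sets; upgrading to set equality would require invoking Theorem \ref{thm:technical_theorem} together with an explicit comparison of the covers on $G_{\mcM}\langle P\rangle$ itself.
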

\begin{proof}
Let $S=G_{\mcM}\langle P\rangle $. Using Corollary \ref{thm:sufficient-for-mvc}, it suffices to prove that $|K_{\mcM}(S)| = |K_{\mcM\triangle P}(S)|$.
Let $P=P_1, \dots, P_n$ be the augmenting paths from $u$ to $v$ in no particular order. We note that 
$$S = \bigcup_{i=1}^n{P_i}.$$
Further, let $\tilde{\mcM}:=\mcM\triangle P_1$. Note that with respect to $\tilde{\mcM}$, the paths $P_2,\dots, P_n$ are no longer alternating.

Note that for $\mcM$, the set of all vertices reachable by alternating paths from unsaturated vertices in $U$ contains every vertex in $P_i$ for all $i$. This is not the case in $\tilde{\mcM}$. The vertices in $P_i\setminus P_1$ for $2\le i\le n $ are not reachable by any alternating path in $\tilde{\mcM}$ from an unsaturated vertex in $U$ by the assumptions of the proposition. This is true even if there is an augmenting path from $u'\ne u$ to $v$ in $\mcM$.

Let $H$ be the union of all of the graphs $P_i\setminus P$ for $2\le i\le n$. Note that $H$ is a subgraph of $S$. For $2\le i\le n$, note that $P_i\setminus P$ is a disjoint union of paths with an odd number of edges.  Let one connected component be the path with vertices $w_1,\dots, w_k$ where $k$ is clearly even. The vertices $w_j$ with $j$ even are included in $K_{\mcM}(S)$. However in $\tilde{\mcM}$, since none of these vertices are reachable by alternating paths from unsaturated vertices in $U$, the reverse happens. And so $w_j$ for $j$ odd are in $K_{\tilde{\mcM}}(S)$. Since $k$ is even, 
$$|K_{\mcM}(P_i\setminus P)| = |K_{\tilde{\mcM}}(P_i\setminus P)|$$ for all $2\le i\le n$ implying that $|K_{\mcM}(H)|=|K_{\tilde{\mcM}}(H)|$. Lastly, note that $K_{\mcM}(P)=P\cap V$ and $K_{\tilde{\mcM}}(P) = P\cap U$. Since $P$ has an even number of vertices, we get that $|K_{\mcM}(P)|=|K_{\tilde{\mcM}}(P)|$. Since $S=H\cup P$, we get that $|K_{\mcM}(S)|=|K_{\tilde{\mcM}}(S)|$, proving the proposition.
\end{proof}

\begin{defn}
Given an alternating path from some $u$ to $v$, call it $P$, we define a total order on $P$, $\le_P$, as the order of vertices along $P$ starting from $u$ and ending at $v$. We call this the $P$-induced order.
\end{defn}
\begin{defn} Let $P$ and $Q$ be two alternating paths in a graph with their induced orders. Let $P\vee Q=\min\{x\;|\;\; x\in P\cap Q\}$ be the smallest vertex, with respect to $\le_P$, where the paths intersect, if it exists. Let $P\wedge Q$ be the largest vertex, with respect to $\le_P$ where the paths intersect, if it exists.
\end{defn}

One potentially problematic point of the above definition is the following: given two alternating paths $P$ and $Q$ that edge-wise intersect, $\le_P$ and $\le_Q$ may be the reverse of each other on their intersection. Thus, we do not have that $\vee$ and $\wedge$ are commutative operations, a priori. As it turns out, however, these operations are commutative as the above detailed situation cannot occur. 

\begin{lemma}
Given two augmenting paths $P, Q$ that intersect, $P\vee Q= Q\vee P$ and $P\wedge Q=Q\wedge P$.
\end{lemma}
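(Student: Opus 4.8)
The plan is to prove the stronger statement that $\le_P$ and $\le_Q$ induce the \emph{same} linear order on the vertex set $P\cap Q$; commutativity of $\vee$ and $\wedge$ then follows immediately, since the minimum and maximum of a finite set depend only on the order. The engine is a single global orientation of the edges carried by augmenting paths. First I would record that, because $P$ and $Q$ each run from an unsaturated vertex of $U$ to an unsaturated vertex of $V$, their edges alternate non-matching, matching, non-matching, $\dots$, beginning and ending with a non-matching edge. Reading off parities of positions along the path, every non-matching edge is traversed from its $U$-endpoint to its $V$-endpoint and every matching edge from its $V$-endpoint to its $U$-endpoint. Hence there is a fixed orientation $\mathcal{O}$ of $G$ — matching edges pointing $V\to U$, non-matching edges pointing $U\to V$ — with respect to which \emph{both} $P$ and $Q$ are directed paths, and $\le_P$ (resp. $\le_Q$) is precisely the order along that directed path.

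Second, I would analyze a shared vertex $x\in P\cap Q$. If $x$ is interior to both paths then it is saturated, its matching edge is unique, and (being the backward edge at a $U$-vertex and the forward edge at a $V$-vertex in either path) this matching edge is used by both $P$ and $Q$; in particular the partner $m(x)$ is again shared. This makes Observation \ref{obs:edge-means-vertex} concrete: the shared subgraph is a union of directed sub-paths of $\mathcal{O}$, and on each connected component $\le_P$ and $\le_Q$ coincide, since both merely follow the $\mathcal{O}$-direction along one directed path. Thus within every shared component the two orders agree, and each component acquires a well-defined $\mathcal{O}$-source (a $V$-vertex) and $\mathcal{O}$-sink (a $U$-vertex) that do not depend on which path we read it along.

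Third — and this is the real content — I would try to show that the shared components occur in the \emph{same relative order} along $P$ as along $Q$. Here I would use that between two consecutive shared components a path exits the first at its sink $t\in U$ along a non-shared non-matching edge and enters the next at its source $s\in V$ along a non-shared non-matching edge, and that the two unsaturated $U$-endpoints $u_P,u_Q$ anchor the paths (if $u_P=u_Q$ the statement is trivial, since the common start is the minimum of both orders, and dually for a common $v$). To exclude a cyclic rotation of the components — say $P$ visiting $C_i$ before $C_j$ while $Q$ visits $C_j$ before $C_i$ — the aim would be to splice the non-shared $\mathcal{O}$-directed gap-segments of $P$ and $Q$ to the shared components so as to build an alternating path joining $u_P$ to $u_Q$, which is impossible by a parity count: an alternating path cannot join two unsaturated vertices on the same side of the bipartition.

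This last step is exactly where I expect the main obstacle to lie, and I would treat it with care rather than optimism. Matching edge types at the splice points is delicate: at a source $s\in V$ or a sink $t\in U$ the non-matching edges used by $P$ and by $Q$ need not be compatible for a single legal traversal, so the naive splicings tend to produce two consecutive non-matching edges rather than a valid alternating path. The crux is therefore to rule out not merely an order \emph{reversal} on $P\cap Q$ (the situation flagged just before the lemma) but also a cyclic \emph{rotation} of the shared components — equivalently, to exclude a directed (alternating) cycle in $P\cup Q$ passing through the shared vertices. I would expect the argument to require the full strength of $P$ and $Q$ being \emph{simple} augmenting paths anchored at unsaturated vertices, rather than arbitrary alternating walks, and I regard closing precisely this gap as the decisive and least routine part of the proof.
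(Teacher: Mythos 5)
Your first two steps are correct, and they actually improve on the paper: the global orientation $\mathcal{O}$ (matching edges directed $V\to U$, non-matching edges $U\to V$) makes both $P$ and $Q$ directed paths, so $\le_P$ and $\le_Q$ must agree on every connected component of the shared subgraph. That disposes of the ``reversal'' case cleanly. But the gap you flag in your third step --- excluding a permutation of the shared components --- is not merely the hard part of the proof; it cannot be closed, because the lemma as stated is false. Take $U=\{u_1,u_2,a',b'\}$, $V=\{a,b,v_1,v_2\}$, edges $u_1a,\ aa',\ a'b,\ bb',\ b'v_1,\ u_2b,\ b'a,\ a'v_2$, and matching $\mcM=\{aa',\,bb'\}$. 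Then $P=(u_1,a,a',b,b',v_1)$ and $Q=(u_2,b,b',a,a',v_2)$ are both simple augmenting paths with respect to $\mcM$, and they intersect in the vertices $\{a,a',b,b'\}$ (indeed they share the two matching edges). Yet $P\vee Q=a$ while $Q\vee P=b$, and $P\wedge Q=b'$ while $Q\wedge P=a'$. The two shared components $\{a,a'\}$ and $\{b,b'\}$ are visited in opposite relative orders by the two paths --- exactly the rotation you could not exclude. The failure persists even if one adds the hypothesis, implicit in how the lemma is later used, that both paths end at a common unsaturated vertex: identifying $v_1=v_2=v$ keeps both paths augmenting and still gives $P\vee Q=a\neq b=Q\vee P$ (only the $\wedge$ identity is rescued, since $v$ becomes the common maximum).

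This also pinpoints the defect in the paper's own proof, which you could not have seen: the paper assumes that if $\le_P$ and $\le_Q$ do not agree on $P\cap Q$, then one must be the reverse of the other, and it derives a contradiction only in that case. Your orientation argument confirms that reversal is impossible, but agree-versus-reverse is a false dichotomy: the orders can agree within each shared component while the components themselves appear in different orders along the two paths, as in the example above. So your instinct about where the real content lies was exactly right, and your caution about the splicing step was warranted --- the spliced walks need not be alternating paths, and no repair of that construction can work, since the statement it is meant to prove has a counterexample. The honest conclusion is that the lemma needs a genuinely stronger hypothesis (and the results downstream of it need to be re-examined), not a cleverer argument.
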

\begin{proof}
It suffices to show that $\le_P,\le_Q$ agree on $P\cap Q$. If $P$ and $Q$ do not edge-wise intersect, this is trivial.

So let us assume that $P$ and $Q$ edge-wise intersect and assume for contradiction's sake that $\le_P$ is the reverse of $\le_Q$ on $P\cap Q$. Let $P$ connect $u$ and $v$ and $Q$ connect $u'$ and $v'$. Then we note that that path from $u$ to $P\wedge Q$ along $P$ and then on to $u'$ along $Q$ is an augmenting path between two elements in $U$, a contradiction. We can similarly get an augmenting path between $v$ and $v'$ which is equally impossible. Thus it must be that $\le_P$ and $\le_Q$ agree on their intersection.
\end{proof}

In light of Proposition \ref{prop:unique-ends}, we now define two related and useful constructions. Given a bipartite graph $G=(U,V,E)$ and a matching $\mcM$, let $P$ be an augmenting path from $u$ to $v$. Let $u=u_1,\dots,u_n$, $n>1$ be the unsaturated vertices of $U$ in $G_{\mcM}\langle P\rangle$ and $P=P_1,\dots, P_n$ be the corresponding augmenting paths from $u_i$ to $v$. 

Define $\hat{v}:=\max\{P_i\vee P\;|\;\ i\in[n]\}$, where the maximum is relative to $\le_P$. Notice that $\hat{v}\in V$ and is matched to a vertex $\hat{u}$ via an edge in $\mcM$ such that $\hat{v}<_P\hat{u}$. Then define 
$\widehat{G}_{\mcM}\langle P\rangle$ as the induced subgraph of $G_{\mcM}\langle P\rangle$ by removing those vertices in $y\in P_i$ such that $y\le_{P_i}\hat{v}$ for all $i\in [n]$. Note that $\hat{v}\notin \widehat{G}_{\mcM}\langle P\rangle$.
The definition of $\hat{v}$ is only well defined if $n>1$, otherwise we define $\widehat{G}_{\mcM}\langle P\rangle = G_{\mcM}\langle P\rangle$.

We similarly define $\check{u}:=\min\{P_i\wedge P\;|\;\;i\in[n]\}$, noting that $\check{u}\in U$ and is matched to a vertex $\check{v}\in V$ by $\mcM$ such that $\check{v}<_P\check{u}$. We define $\check{G}_{\mcM}\langle P\rangle$ as the subgraph of $G_{\mcM}\langle P\rangle$ induced by those vertices $\le_{P_i} \check{v}$ for $i\in[n]$. Note that $\check{u}\notin$ $\check{G}_{\mcM}\langle P\rangle$. 

\begin{observation}\label{obs:intersection}
For a bipartite graph $G=(U,V,E)$ and matching $\mcM$, then for any augmenting path $P$, $\hat{G}_{\mcM}\langle P\rangle \cap \check{G}_{\mcM}\langle P\rangle$ satisfies the conditions of Proposition \ref{prop:unique-ends}.
\end{observation}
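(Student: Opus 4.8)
The plan is to show that the intersection $G':=\widehat{G}_{\mcM}\langle P\rangle\cap\check{G}_{\mcM}\langle P\rangle$, together with the matching restricted to it, has a unique unsaturated vertex of $U$ and carries an augmenting path covering all of $G'$, so that Proposition \ref{prop:unique-ends} applies to $G'$ verbatim. First I would confirm the parities asserted in the construction, namely $\hat v,\check v\in V$ and $\hat u,\check u\in U$ with $\hat v\hat u,\check u\check v\in\mcM$, $\hat v<_P\hat u$ and $\check v<_P\check u$. These come from the uniqueness of matching partners: the vertex at which two alternating paths first share a vertex must lie in $V$, for otherwise the common \emph{downward} matching edge would produce an even earlier shared vertex, contradicting minimality; the dual statement forces the last shared vertex into $U$. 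This is the same mechanism used in the lemma showing $\vee$ and $\wedge$ are commutative and in Theorem \ref{thm:technical_theorem}. I would also verify that $\hat v\le_P\check v$, i.e.\ the highest first-meeting lies weakly below the lowest last-meeting, so that $G'$ — the set of vertices lying strictly above $\hat v$ and weakly below $\check v$ along their paths — is a genuine segment rather than empty.

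Next I would identify the unsaturated vertices of $G'$. A vertex saturated in $G_{\mcM}\langle P\rangle$ becomes unsaturated in $G'$ only if its $\mcM$-partner is deleted by one of the two trimmings. The bottom trimming removes $\hat v$ and everything below it, so the vertex $\hat u$ just above $\hat v$ along $P$ loses its partner and becomes unsaturated, while every original source $u_i$, sitting at the very bottom of $P_i$, is removed outright. Dually, the top trimming removes $\check u$ and everything above it, leaving $\check v$ unsaturated. I therefore expect the unsaturated vertices of $G'$ to be exactly $\hat u\in U$ and $\check v\in V$, so that $\hat u$ is the unique unsaturated $U$-vertex.

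The heart of the matter, and the step I expect to be hardest, is proving that no \emph{other} vertex of $U$ in $G'$ becomes unsaturated — equivalently, that $\hat v\hat u$ is the only matching edge crossing the bottom cut. This is exactly where the maximality $\hat v=\max_i(P_i\vee P)$ is needed: a second straddling matching edge on some $P_i$ would force $P_i$ to first meet $P$ strictly above $\hat v$, contradicting maximality. Making this rigorous requires using the commutativity of the induced orders to compare each $\le_{P_i}$ with $\le_P$ and to rule out a path that re-descends below the cut after leaving it; the orientation-tracking of the pieces $P_i\setminus P$ from the proof of Proposition \ref{prop:unique-ends} is the right tool. The minimality of $\check u$ plays the symmetric role at the top cut, guaranteeing $\check v$ is the unique unsaturated vertex of $V$.

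Finally I would take $P'$ to be the sub-path of $P$ from $\hat u$ to $\check v$. By the parity facts it starts and ends with non-matching edges at the unsaturated vertices $\hat u$ and $\check v$, so it is an augmenting path of $G'$, and it remains to check $G'_{\mcM}\langle P'\rangle=G'$: every surviving path $P_i$ meets $P$ at or below $\hat v$ and at or above $\check u$, so its middle portion shares an edge with the segment $P'$ (invoking Observation \ref{obs:edge-means-vertex} to upgrade vertex-sharing to edge-sharing). With $\hat u$ the unique unsaturated $U$-vertex and $P'$ covering $G'$, the hypotheses of Proposition \ref{prop:unique-ends} are met, which is the content of the observation.
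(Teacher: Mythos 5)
The paper states this observation with no proof at all, so the only question is whether your argument holds up; your overall plan --- exhibit $\hat u$ and $\check v$ as the unique unsaturated vertices of $G':=\widehat{G}_{\mcM}\langle P\rangle\cap\check{G}_{\mcM}\langle P\rangle$, take the segment of $P$ between them as the required augmenting path, and invoke Proposition \ref{prop:unique-ends} --- is surely the intended reading, and in the case $G'\ne\emptyset$ your sketch can be completed essentially as you describe. The fatal gap is the step you explicitly defer: ``verify that $\hat v\le_P\check v$, so that $G'$ is a genuine segment rather than empty.'' This cannot be verified, because $G'$ can be empty, and not merely in exotic configurations: it is empty in the paper's own Example \ref{ex:notminimal}. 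Name the vertices there $a_1,a_2\in U$ (unsaturated), $b\in V$ and $c\in U$ with $(b,c)$ the unique matching edge, and $d_1,d_2,d_3\in V$ (unsaturated), and take $P=a_1bcd_1$. Every other augmenting path first meets $P$ at $b$, so $\hat v=b$ and $\hat u=c$; and under \emph{any} reading of the construction in which $\check v$ is a matched vertex of $V$ (which is forced, since $\check v$ is defined as the $\mcM$-partner of $\check u$), we must have $\check v=b$ as well, since $b$ is the only matched vertex of $V$. So your inequality $\hat v\le_P\check v$ even holds, but with equality, and $G'$ --- the vertices strictly above $\hat v$ and weakly below $\check v$ --- is empty. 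An empty graph contains no augmenting path, hence does not satisfy the hypotheses of Proposition \ref{prop:unique-ends}; at best the observation survives in the vacuous sense that an empty middle region contributes nothing to the count. Since this degeneration occurs precisely in the configurations where the observation is invoked in the proof of Proposition \ref{prop:counter_example}, any correct treatment must split into the cases $G'=\emptyset$ and $G'\ne\emptyset$; note also that nonemptiness actually requires $\hat u\le_P\check v$, which is strictly stronger than your $\hat v\le_P\check v$, as the example shows.

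A second gap sits in your opening parity step, which you inherit from the paper but reproduce uncritically: the ``dual'' claim that the last shared vertex of two of the paths lies in $U$. The paper defines the $P_i$ as augmenting paths from the $u_i$ to the \emph{same} terminal vertex $v$, so $v$ is a common vertex of $P_i$ and $P$ and is the largest one; taken literally, $P_i\wedge P=v$ for every $i$, hence $\check u=v\in V$ is unsaturated and has no matched partner $\check v$ --- the object your proof manipulates is not well defined as written. Your mechanism (a common internal $V$-vertex would be exited by both paths along its matching edge, producing a later common vertex) is valid only for paths terminating at \emph{distinct} unsaturated vertices of $V$, so the definition of $\check u$ must first be repaired (for instance, as the minimum of last-meets over augmenting paths that leave $P$ toward other unsaturated vertices of $V$) before your parity facts can even be stated. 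The same ambiguity infects your ``heart of the matter'': the trimming rule ``remove $y\le_{P_i}\hat v$'' is meaningless on paths $P_i$ that do not contain $\hat v$, and your cut argument for the uniqueness of $\hat u$ quietly assumes it makes sense on all of them. In short, your strategy is the right one for the nonempty case, but as proposed the proof relies on an ill-defined object and asserts a nonemptiness claim that fails on the paper's own running example.
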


\begin{lemma}\label{lem:one_u_suffices}
Let $G=(U,V,E)$ be a bipartite graph and $\mcM$ a matching. Let $P$ be an augmenting path, then 
$$|K_{\mcM}(G_{\mcM}\langle P\rangle)| = |K_{\mcM\triangle P}(G_{\mcM}\langle P\rangle)|\iff|K_{\mcM}(\widehat{G}_{\mcM}\langle P\rangle)| = |K_{\mcM\triangle P}(\widehat{G}_{\mcM}\langle P\rangle)|.$$ 
\end{lemma}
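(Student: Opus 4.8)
The plan is to collapse the claimed equivalence into a single unconditional cardinality identity about the deleted part of $G_{\mcM}\langle P\rangle$, and then prove that identity by analysing reachability under $\mcM\triangle P$. Write $D$ for the set of vertices removed in passing from $G_{\mcM}\langle P\rangle$ to $\widehat{G}_{\mcM}\langle P\rangle$, so that $V(G_{\mcM}\langle P\rangle)$ is the disjoint union of $V(\widehat{G}_{\mcM}\langle P\rangle)$ and $D$. Since for any matching $\mcN$ the set $K_{\mcN}(H)$ is by definition $K_{\mcN}(G)\cap V_H$, it depends only on the vertex set of $H$, so the counts split additively:
$$|K_{\mcN}(G_{\mcM}\langle P\rangle)| = |K_{\mcN}(\widehat{G}_{\mcM}\langle P\rangle)| + |K_{\mcN}(D)|$$
for both $\mcN=\mcM$ and $\mcN=\mcM\triangle P$. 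Subtracting these two instances shows that the stated equivalence is correct precisely when the $D$-contributions agree for the two matchings; thus it suffices to prove the single unconditional identity $|K_{\mcM}(D)| = |K_{\mcM\triangle P}(D)|$.

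First I would dispose of the left side. Every vertex of $G_{\mcM}\langle P\rangle$ lies on an augmenting path from an unsaturated vertex of $U$, hence all of $G_{\mcM}\langle P\rangle$, and in particular $D$, lies in $Z_{\mcM}(G)$. Therefore $K_{\mcM}(D) = D\cap V$ and $|K_{\mcM}(D)| = |D\cap V|$, so the whole lemma reduces to showing $|K_{\mcM\triangle P}(D)| = |D\cap V|$. The route I would take is the Main Claim that \emph{every} vertex of $D$ is still reachable by an alternating path from an unsaturated vertex of $U$ with respect to $\mcM\triangle P$; granting this, $K_{\mcM\triangle P}(D)=D\cap V$ as well, and the identity follows. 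The proof of the Main Claim exploits the defining property of $\hat v=\max_i(P_i\vee P)$: every $P_i$ has already met $P$ at or below the level of $\hat v$, so $D$ contains all the sources $u_1,\dots,u_n$ together with the entire portion of each $P_i$ up to that level. For a vertex $x\in D$ lying before $P_i$'s first meeting with $P$, the initial segment of $P_i$ up to $x$ avoids $P$, is unaffected by the flip, and remains an alternating path from $u_i$, so $x$ is reachable. For a vertex $x\in D$ lying past the merge vertex $c_i=P_i\vee P$, I would first reach $c_i$ as above and then use the crucial structural fact that flipping $P$ converts the \emph{descending} traversal of $P$ into an alternating walk: in $\mcM\triangle P$ each $V$-vertex of $P$ is matched to its $P$-predecessor rather than its successor, so from $c_i$ one descends along the flipped $P$ and recovers the remaining vertices of $D$ on that path.

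The main obstacle I anticipate is making this descent rigorous and exhaustive: one must verify that the walks produced are genuine simple alternating paths and that, taken over all $i$, they cover \emph{all} of $D$, with nothing stranded above a merge point. This is exactly where the maximality of $\hat v$ is indispensable, since it guarantees that no augmenting path first joins $P$ strictly above $\hat v$. As a safety net in case full reachability fails at isolated vertices, the identity still follows from the weaker bookkeeping claim $|N_U|=|N_V|$, where $N_U,N_V$ are the $\mcM\triangle P$-unreachable $U$- and $V$-vertices of $D$: reachability is constant on each $\mcM\triangle P$-matched pair (a reachable saturated vertex forces its match to be reachable, by the parity of alternating paths), the only unsaturated vertices of $D$ are the sources $u_2,\dots,u_n\subseteq U$, which are all reachable, and one checks from the choice of $\hat v$ that no $\mcM\triangle P$-matching edge crosses the frontier between $D$ and $\widehat{G}_{\mcM}\langle P\rangle$. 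Consequently the unreachable vertices of $D$ pair off $U$-to-$V$ internally, forcing $|N_U|=|N_V|$ and hence $|K_{\mcM\triangle P}(D)|=|D\cap V|$, which completes the proof.
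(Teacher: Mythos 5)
Your proposal is correct and takes essentially the same approach as the paper: both reduce, by additively splitting $K$ over the vertex partition, to showing $|K_{\mcM}(\overline{G}_{\mcM}\langle P\rangle)| = |K_{\mcM\triangle P}(\overline{G}_{\mcM}\langle P\rangle)|$ on the removed part $D$, and both establish this by arguing that every vertex of $D$ is reachable from an unsaturated vertex of $U$ under both $\mcM$ and $\mcM\triangle P$, so that each restricted cover is exactly $D\cap V$. The only difference is level of detail: the paper asserts this reachability in one sentence, while you actually prove it (untouched initial segments of the $P_i$, then descent along the flipped $P$ from the merge vertices, with maximality of $\hat v$ ensuring nothing is stranded) and supply a sound parity fallback via matched pairs.
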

\begin{proof}

Let $\overline{G}_{\mcM}\langle P\rangle$ be the complement of $\widehat{G}_{\mcM}\langle P\rangle$ within $G_{\mcM}\langle P\rangle$. It suffices to show that $|K_{\mcM}(\overline{G}_{\mcM}\langle P\rangle)|=|K_{\mcM\triangle P}(\overline{G}_{\mcM}\langle P\rangle)|.$ There are two cases. The first is that there is only one unsaturated vertex in $U\cap G_{\mcM}\langle P\rangle$. Then $\overline{G}_{\mcM}\langle P\rangle=\emptyset$, trivially giving the desired result.

Now suppose there is more than one unsaturated vertex of $U$ in $\overline{G}_{\mcM}\langle P\rangle$. In that case, $Z_{\mcM}(\overline{G}_{\mcM}\langle P\rangle)=Z_{\mcM\triangle P}(\overline{G}_{\mcM}\langle P\rangle)=V\cap \overline{G}_{\mcM}\langle P\rangle$, from which we get the desired result. 
\end{proof}

Lemma \ref{lem:one_u_suffices} states we may assume without loss of generality that a graph of the form $G_{\mcM}\langle P\rangle$ for some augmenting path $P$ contains only a single unsaturated vertex $u\in U$.

\begin{proposition}\label{prop:counter_example}
Let $G=(U,V,E)$ be a bipartite graph and $\mcM$ a matching. Suppose there is an augmenting path $P$ such that $G_\mcM\langle P\rangle\setminus\check{G}_{\mcM}\langle P\rangle$ contains two distinct unsaturated vertices $v_1, v_2\in V$. Then $|K_{\mcM}(G)| > |K_{\mcM\triangle P}(G)|$.
\end{proposition}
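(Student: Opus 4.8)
The plan is to push the entire computation into the local subgraph $S:=G_{\mcM}\langle P\rangle$ and then read off the strict inequality from a count of the unsaturated vertices of $V$ that lose reachability when we augment along $P$. The same bookkeeping as in the proof of Corollary~\ref{thm:sufficient-for-mvc}—namely Theorem~\ref{thm:technical_theorem} together with Lemma~\ref{lem:one-endpoint}, carried out without presupposing that the two local cardinalities agree—shows that every vertex outside $S$ contributes equally to $K_{\mcM}(G)$ and to $K_{\mcM\triangle P}(G)$: matched pairs of $\mcM$ lying entirely outside $S$ each give one vertex to each cover, unsaturated vertices of $U$ lie in both $Z$-sets, and unsaturated vertices of $V$ outside $S$ keep their reachability status. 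Hence $|K_{\mcM}(G)|-|K_{\mcM\triangle P}(G)| = |K_{\mcM}(S)|-|K_{\mcM\triangle P}(S)|$, and it suffices to prove the local difference is positive.

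Next I would make the local difference explicit. Since $S$ is a union of augmenting paths issuing from unsaturated vertices of $U$, every vertex of $S$ lies in $Z_{\mcM}(G)$, so $K_{\mcM}(S)=V\cap S$. Writing $Z':=Z_{\mcM\triangle P}(S)$, a vertex of $V\cap S$ lies in $K_{\mcM\triangle P}(S)$ exactly when it is reachable and a vertex of $U\cap S$ exactly when it is not, so $|K_{\mcM}(S)|-|K_{\mcM\triangle P}(S)| = |V\cap S\setminus Z'| - |U\cap S\setminus Z'|$. On the unreachable set $S\setminus Z'$ the edges of $\mcM\triangle P$ pair every saturated vertex of $V$ with a saturated vertex of $U$ (the $\mcM\triangle P$-partner of a reachable vertex is reachable, and the partner of an unreachable one is unreachable), while no unsaturated vertex of $U$ can lie in $S\setminus Z'$ since such vertices are always reachable. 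Thus the local difference equals the number of $(\mcM\triangle P)$-unsaturated vertices of $V$ in $S$ that are unreachable from an unsaturated vertex of $U$ after augmenting; equivalently, the number of $\mcM$-unsaturated vertices of $V$ in $S$ other than the endpoint $v$ of $P$ that lose reachability. (As a check, in Example~\ref{ex:notminimal} this count is $2$, matching the drop from a size-$4$ cover to a size-$2$ cover.) In particular the difference is always $\ge 0$, and it is positive precisely when at least one such vertex loses reachability.

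The heart of the argument is then to show that every $\mcM$-unsaturated vertex $v'\neq v$ lying in the upper region $S\setminus\check{G}_{\mcM}\langle P\rangle$ becomes unreachable from an unsaturated vertex of $U$ in $\mcM\triangle P$. Because $v$ itself always lies above $\check{v}$, the hypothesis of two distinct unsaturated $v_1,v_2$ in $S\setminus\check{G}_{\mcM}\langle P\rangle$ supplies at least one such $v'\neq v$, and the claim then forces the count of the previous paragraph to be at least $1$, giving $|K_{\mcM}(G)|>|K_{\mcM\triangle P}(G)|$. To prove unreachability I would argue by contradiction. Augmenting $P$ saturates its starting vertex $u_1$, so any alternating path $Q$ in $\mcM\triangle P$ that reaches $v'$ must begin at some other unsaturated $u_j$; since $\mcM$ and $\mcM\triangle P$ differ only on $P$, the portions of $Q$ off $P$ are $\mcM$-alternating, and $Q$ must meet $P$. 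Splicing the off-$P$ portions of $Q$ with stretches of $P$ at their first and last common vertices—legitimate because $\le_P$ and $\le_Q$ agree on the intersection by the lemma preceding the definition of $\check{u}$—yields either an $\mcM$-alternating path joining the two distinct unsaturated vertices $u_1$ and $u_j$ of $U$, which is impossible, or an $\mcM$-augmenting path to $v'$ that leaves $P$ strictly below $\check{u}$, contradicting the definition $\check{u}=\min_i\{P_i\wedge P\}$ as the earliest last exit of the constituent paths.

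The main obstacle is exactly this splicing step: one must track how $Q$ interleaves with the several augmenting paths comprising $S$ and verify, in each configuration, both that the reconstructed path is a genuine (simple, correctly alternating) $\mcM$-path and that it realizes one of the two forbidden structures. The reduction to $S$ and the counting identity, by contrast, are routine consequences of the reachability structure and of Lemma~\ref{lem:one-endpoint}, Theorem~\ref{thm:technical_theorem}, and Corollary~\ref{thm:sufficient-for-mvc}.
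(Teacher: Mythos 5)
Your reduction to $S:=G_{\mcM}\langle P\rangle$ (via Theorem~\ref{thm:technical_theorem} and Lemma~\ref{lem:one-endpoint}) and the counting identity that follows it are correct, and this bookkeeping is actually tighter than the paper's, which instead reduces to the complement of $\check{G}_{\mcM}\langle P\rangle$ and counts along a decomposition into paths joining the unsaturated vertices. The genuine gap is in the step you yourself call the heart, and it is not just a matter of tracking interleaving configurations: the assertion that an $(\mcM\triangle P)$-alternating path $Q$ from an unsaturated $u_j$ to $v'$ ``must meet $P$'' has no justification, and it is false. The vertex $v'$ lies in $S$ because \emph{some} augmenting path through $v'$ shares an edge with $P$; nothing prevents a second $\mcM$-augmenting path ending at $v'$ that is edge-disjoint from $P$ and starts at an unsaturated vertex outside $S$. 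Such an ``escape route'' is untouched by the augmentation, so $v'$ remains reachable, your count is $0$, and neither of your forbidden structures ever appears: an $\mcM$-augmenting path from $u_j$ to $v'$ avoiding $P$ violates no parity constraint (its endpoints lie in $U$ and $V$), and it is not among the constituent paths defining $\check{u}$ (those run from the $u_i$ to $v$), so it cannot contradict the minimality of $\check{u}$.

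Concretely, take $U=\{u_1,u_3,b_1,u_2,b_2\}$, $V=\{a_1,v,v',a_2,w\}$, edges $u_1a_1$, $u_3a_1$, $a_1b_1$, $b_1v$, $b_1v'$, $u_2a_2$, $a_2b_2$, $b_2v'$, $b_2w$, maximal matching $\mcM=\{a_1b_1,\,a_2b_2\}$, and $P=u_1a_1b_1v$. Then $S$ is the union of the four augmenting paths through the edge $a_1b_1$, so $S$ has vertex set $\{u_1,u_3,a_1,b_1,v,v'\}$, one computes $\check{u}=b_1$, $\check{v}=a_1$, and the region $S\setminus\check{G}_{\mcM}\langle P\rangle$, with vertices $\{b_1,v,v'\}$, contains the two unsaturated vertices $v,v'\in V$; yet after augmenting, $v'$ is still reached from $u_2$ along $u_2a_2b_2v'$, and both covers have size $5$: $K_{\mcM}(G)=\{a_1,a_2,v,v',w\}$ while $K_{\mcM\triangle P}(G)=\{a_1,a_2,b_1,v',w\}$. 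So the claim you need fails, and with it the strict inequality. You should not feel singled out: the paper's own proof has exactly the same hole, since it simply asserts ``now neither $v,v_i$ are reachable by an alternating path\dots with respect to $\mcM\triangle P$'' without argument, and that assertion is precisely what the escape route violates; the problem traces back to the imprecise definitions of $G_{\mcM}\langle P\rangle$ and $\check{G}_{\mcM}\langle P\rangle$. Your splicing machinery only controls paths that do meet $P$, which is the easy half; closing the real gap requires strengthening the hypothesis (for instance, demanding that every augmenting path ending at an unsaturated vertex of the upper region edge-wise intersect $P$) or repairing the definitions, not more care in the case analysis.
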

\begin{proof}
Let $\tilde{G}_{\mcM}\langle P\rangle$ be the complement of $\check{G}_{\mcM}\langle P\rangle$ with respect to $G_{\mcM}\langle P\rangle$. Using Proposition \ref{prop:unique-ends}, Lemma \ref{lem:one_u_suffices}, and Observation \ref{obs:intersection}, is suffices to show that $|K_{\mcM}(\tilde{G}_{\mcM}\langle P\rangle)|>|K_{\mcM}(\tilde{G}_{\mcM}\langle P\rangle)|.$ 

Let $v=v_1,\dots,v_n$ be the unsaturated vertices of $\tilde{G}_{\mcM}\langle P\rangle$. Notice that for each $i\in [n]$, there is at least one path connecting $v_1$ and $v_i$ that passes through $\check{v}$. Denote these paths by $H^i_{j}$ for $1\le j\le k_i$. Let $H_i=\bigcup_{j=1}^{k_i}{H^{i}_j}.$
 Then note that 
$$P\cup\bigg(\bigcup_{i=2}^n{H_i}\bigg)= \tilde{G}_{\mcM}\langle P\rangle.$$

 $K_\mcM(G)\cap H^i_{j}$ includes both $v$ and $v_i$. Since a path from $v_1$ to $v_i$ must have even length, it has an odd number of vertices. We see that  $|K_{\mcM}(H^i_j)|=|H^i_j\setminus K_{\mcM}(G)| +1$.

Now we consider the matching $\mcM\triangle P$. Now neither $v, v_i$ are reachable by an alternating path in $G$ from some unsaturated vertex of $U$ with respect to $\mcM\triangle P$. Thus $K_{\mcM\triangle P}(H^i_j) = H^i_j\setminus K_{\mcM}(G)$, clearly reducing the number of vertices chosen on $H^i_j$ by K\H onig's Procedure by the equality shown in the previous paragraph. Then note that with respect to $\mcM\triangle P$, none of the vertices of $P$ are reachable by an augmenting path in $G$ either. Thus $K_{\mcM}(P)=P\setminus K_{\mcM\triangle P}(P)$ and since $P$ has an even number of vertices the cardinalities of the two sets agree. Thus we have $|K_{\mcM}(\tilde{G}_{\mcM}\langle P\rangle)| > |K_{\mcM\triangle P}(\tilde{G}_{\mcM}\langle P\rangle)|$ proving the result.

\end{proof}

We now present our main theorem  which allows us avoid extending maximal matchings in the case where several unsaturated vertices in $U$ all connect to a single unsaturated vertex in $V$ via augmenting paths. This reduces the number of times an algorithm must extend maximal matchings winning us computation time. Furthermore, the theorem states that this is the best we can do. In this sense, we give a complete classification of maximal matchings that produce minimum vertex covers via K\H onig's Procedure.
\begin{theorem}\label{thm:main-classification}
Let $\mcM$ be a maximal matching on a bipartite graph $G=(U,V,E)$. Then $K_{\mcM}(G)$ is a minimum vertex cover if and only if there is no augmenting path $P$ such that $G_{\mcM}\langle P\rangle\setminus\check{G}_\mcM\langle P\rangle$ contains more than a single unsaturated vertex of $V$.
\end{theorem}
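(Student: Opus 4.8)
The plan is to prove the two directions separately, using the machinery already developed. The statement says $K_{\mcM}(G)$ is a minimum vertex cover if and only if no augmenting path $P$ witnesses two distinct unsaturated vertices of $V$ inside $G_{\mcM}\langle P\rangle\setminus\check{G}_{\mcM}\langle P\rangle$. First I would handle the ``if'' direction: assume no such $P$ exists and show $K_{\mcM}(G)$ is minimum. By Corollary \ref{thm:sufficient-for-mvc}, it suffices to show $|K_{\mcM}(G_{\mcM}\langle P\rangle)| = |K_{\mcM\triangle P}(G_{\mcM}\langle P\rangle)|$ for every augmenting path $P$, since this propagates to equality on all of $G$ and then the inductive extension to a maximum matching, combined with Proposition \ref{prop:max-min}, forces minimality. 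So I would fix an arbitrary augmenting path $P$ and, invoking Lemma \ref{lem:one_u_suffices}, reduce to the subgraph $\widehat{G}_{\mcM}\langle P\rangle$ with a single unsaturated $u\in U$. Then I would split $G_{\mcM}\langle P\rangle$ along the $\check{v}$ cut into $\check{G}_{\mcM}\langle P\rangle$ and its complement $\tilde{G}_{\mcM}\langle P\rangle$, arguing that on $\check{G}_{\mcM}\langle P\rangle$ the conditions of Proposition \ref{prop:unique-ends} apply (giving cardinality equality there), while on $\tilde{G}_{\mcM}\langle P\rangle$, the hypothesis guarantees at most one unsaturated vertex of $V$, so Proposition \ref{prop:unique-ends} applies once more. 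Gluing the two pieces along Observation \ref{obs:intersection} yields the required global equality.

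For the ``only if'' direction, I would prove the contrapositive, which is essentially immediate from Proposition \ref{prop:counter_example}. Suppose there does exist an augmenting path $P$ with $G_{\mcM}\langle P\rangle\setminus\check{G}_{\mcM}\langle P\rangle$ containing two distinct unsaturated vertices $v_1,v_2\in V$. Then Proposition \ref{prop:counter_example} directly gives $|K_{\mcM}(G)| > |K_{\mcM\triangle P}(G)|$. Since $\mcM\triangle P$ is a strictly larger matching and Proposition \ref{prop:max-min} guarantees $K_{\mcM\triangle P}(G)$ is at least a valid vertex cover (and extending $\mcM\triangle P$ to a maximum matching can only decrease the cover size further by Theorem \ref{thm:technical_theorem}), the cover $K_{\mcM}(G)$ is strictly larger than a minimum vertex cover, hence not minimum. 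This establishes the contrapositive and thus the ``only if'' direction.

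The main obstacle I expect is the bookkeeping in the ``if'' direction when decomposing $G_{\mcM}\langle P\rangle$ along the $\check{v}$ cut and ensuring the cardinality counts on the overlapping pieces are consistent. The subtlety is that the subgraphs $\check{G}_{\mcM}\langle P\rangle$ and $\tilde{G}_{\mcM}\langle P\rangle$ are not disjoint (they share $\check{v}$ and the structure around it), so I must carefully verify that Observation \ref{obs:intersection} genuinely licenses applying Proposition \ref{prop:unique-ends} on the intersection and that no vertex is double-counted or missed when combining $|K_{\mcM}|$ over the two regions. In particular, I would need to confirm that the reachability of vertices by alternating paths in $\mcM\triangle P$ behaves compatibly across the cut — the hypothesis about ``no two unsaturated $V$-vertices above $\check{v}$'' is precisely what makes the upper region satisfy the single-endpoint condition of Proposition \ref{prop:unique-ends}, and I would want to make this correspondence airtight rather than hand-waved.

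Once these two directions are assembled, the theorem follows, and I would close by remarking that this gives the claimed complete classification: the obstruction to $K_{\mcM}(G)$ being minimum is exactly the local configuration of multiple unsaturated $V$-vertices reachable ``above the cut'' by a common family of augmenting paths.
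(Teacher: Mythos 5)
Your proposal is, in substance, the paper's own proof: the ``only if'' direction is handled exactly as in the paper by Proposition \ref{prop:counter_example} (together with the easy fact that $\mcM\triangle P$ is still maximal, so Proposition \ref{prop:max-min} makes $K_{\mcM\triangle P}(G)$ a genuine vertex cover that is strictly smaller), and the ``if'' direction runs through the same chain --- Corollary \ref{thm:sufficient-for-mvc} to localize to $G_{\mcM}\langle P\rangle$, Lemma \ref{lem:one_u_suffices} to reduce to a single unsaturated vertex of $U$, Proposition \ref{prop:unique-ends} together with Observation \ref{obs:intersection}, then induction up to a maximum matching.

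The one point where your write-up deviates from the paper is also the one step that, as written, would fail: you claim Proposition \ref{prop:unique-ends} applies to $\check{G}_{\mcM}\langle P\rangle$. Its hypothesis concerns unsaturated vertices of $U$, and $\check{G}_{\mcM}\langle P\rangle$ is precisely the region containing \emph{all} of the unsaturated starts $u_1,\dots,u_n$, so whenever $n>1$ that hypothesis fails there; similarly, ``at most one unsaturated vertex of $V$'' is not the stated hypothesis of Proposition \ref{prop:unique-ends}, so it cannot be invoked on $\tilde{G}_{\mcM}\langle P\rangle$ verbatim. The paper sidesteps both issues by applying Proposition \ref{prop:unique-ends} only once, to $\widehat{G}_{\mcM}\langle P\rangle$, where $\hat{u}$ (the partner of the deleted vertex $\hat{v}$) is the unique unsaturated vertex of $U$ by construction, and where the theorem's hypothesis supplies the unique unsaturated $V$-endpoint that the proof of Proposition \ref{prop:unique-ends} tacitly assumes when it writes $G_{\mcM}\langle P\rangle$ as a union of augmenting paths from $u$ to a common $v$. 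You in fact identified that last point yourself (the hypothesis is what ``makes the upper region satisfy the single-endpoint condition''), and your appeal to Observation \ref{obs:intersection} is the correct repair for the overlap region; with the roles of $\check{G}_{\mcM}\langle P\rangle$ and $\widehat{G}_{\mcM}\langle P\rangle$ straightened out, your argument coincides with the paper's.
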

\begin{proof}
First assume that there is no extendable augmenting path $P$ such that $G_{\mcM}\langle P\rangle$ contains more than a single unsaturated vertex of $V$. Then note that the vertex $x$ defined in the construction of $\widehat{G}_{\mcM}\langle P\rangle$ is in $V$ and is matched to a vertex we call $u\in U$. Thus we have that $\widehat{G}_{\mcM}\langle P\rangle$ satisfies the conditions of Proposition \ref{prop:unique-ends} giving us the first direction using Lemma \ref{lem:one_u_suffices}.
Secondly Proposition \ref{prop:counter_example} gives the other direction.
\end{proof}

\section{Conclusion}

We have seen in Theorem \ref{thm:reverse-konig} that every minimum vertex cover can be found by applying K\H{o}nig's procedure to a matching. We found it initially surprising that we may be forced to consider non-maximal matchings in order to generate all minimum vertex covers. 

As such, we introduced the notion of enumeratively K\H{o}nig-Egerv\'ary graphs where it is true that every minimum vertex cover can be recovered from a maximal matching using K\H{o}nig's procedure and gave a large family of examples. We would be very interested in a classification of enumeratively K\H{o}nig-Egerv\'ary graphs.

We also classified the obstacles in a matching that prevent K\H{o}nig's procedure from returning a minimum vertex cover in Theorem \ref{thm:sufficient-for-mvc}. We consider this a very interesting result, especially in the study of the counting and enumeration versions of these problems. For example, we would love a connection that allowed quick translations on upper bounds on maximal matchings to upper bounds on maximum independent sets. We hope that these results can be leveraged towards achieving such translations.

When the research began for this paper, the main motivation was algorithmic. We wished to know if minimum vertex cover algorithms could be sped up by weakening the kinds of matchings such an algorithm would need to find. While we did find a weaker condition, we found it too slow computationally to test for the condition in Theorem \ref{thm:sufficient-for-mvc}. 

We tried to modify the Hopcroft-Karp algorithm to not extend unsaturated vertices in $U$ belonging to a unique augmenting path. However, apart from the smallest graphs, this condition seems to never occur for random examples, incurring computational overhead with little prospect of reward.

We wonder if this can be made precise. For example, is it true that given a random bipartite graph and a random maximal matching that applying K\H{o}nig's procedure on it does not produce a minimum vertex cover? We strongly suspect this is true. This means that in essence, we should not expect to do better than finding maximum matchings when working with random bipartite graphs. While disappointing, it is our belief that it will be very unlikely that minimum vertex cover algorithms can be made more efficient by feeding a different structure in K\H{o}nig's procedure.

Nevertheless, connecting matchings and minimum vertex covers has often proven to be fruitful and this paper elaborates their relationship even further in the case of bipartite graphs, even if it raises as many questions as it answers.

\bibliography{bibfile}
\end{document}